\numberwithin{equation}{thm}
\newcommand{\dmeff}{{\rm DM}_{gm}^{e\! f\! f}}
\newcommand{\mmeff}{{\rm MM}^{e\! f\! f}}
\newcommand{\eff}{{e\! f\! f}}
\newcommand{\dm}{{\rm DM}_{gm}}
\begin{document}
\title{Description of mixed motives}
\author{Doosung Park}
\address{Institut f\"ur Mathematik, Universit\"at Z\"urich, Winterthurerstr. 190, 8057 Z\"urich, Switzerland}
\email{doosung,park@math.uzh.ch}
\subjclass[2000]{14C15}
\keywords{\'etale realizations, motives, weight filtrations}
\begin{abstract}
  Assuming the K\"unneth type standard conjecture, we propose a way to describe objects of mixed motives explicitly. We study their formal properties, and we associate mixed motives to schemes smooth and separated over a field. This serves as a universal cohomology theory. We also discuss $\ell$-adic realizations, and we discuss an unconditional construction of 2-motives and their properties.
\end{abstract}
\maketitle
\section{Introduction}
\begin{none}\label{1.2}
Throughout this paper, fix a perfect field $k$ with an embedding $k\hookrightarrow \overline{k}$ to its algebraic closure.

A conjecture (\cite[Definition 2.20]{Jan}) is that there is an abelian category of mixed motives ${\rm MM}$ over $k$ such that for each mixed motive $M$, there is a weight filtration
\begin{equation}
\label{1.2.1}
M=W^{r}M\rightarrow \cdots \rightarrow W^{-1}M=0
\end{equation}
such that $W^{i}M/W^{i-1}M$ is a pure motive of weight $i$ for each $i$.

Voevodsky constructed the triangulated category ${\rm DM}_{gm}^\eff(k,{\bf Z})$ of geometric effective motives and the triangulated category ${\rm DM}_{gm}(k,{\bf Z})$ of geometric motives (\cite{Voe2}).
If there is a conic over $k$ with no rational points, then ${\rm DM}_{gm}(k,{\bf Z})$ cannot have a reasonable $t$-structure whose heart is ${\rm MM}$ (\cite[Proposition 4.3.8]{Voe2}).
It is still conjectured that the triangulated category ${\rm DM}_{gm}(k,{\bf Q})$ of motives with ${\bf Q}$-coefficient has a reasonable $t$-structure whose heart is ${\rm MM}$.
Indeed, in \cite{Han} there is a conditional proof of the existence of a reasonable $t$-structure of ${\rm DM}_{gm}(k,{\bf Q})$ assuming several conjectures including the Hanamura vanishing conjecture given as follows.
    \item[(${\rm Van}_{\leq n}$)] Assume $({\rm C}_{\leq n})$. For any $X,Y\in SmProj_{\leq n}$ and $d,e\geq 0$,
    \[{\rm Hom}(M_d(X),M_e(Y)[p])=0\]
Here,  we denote by $SmProj_{\leq n}$ the category of schemes smooth and projective over $k$ whose dimensions are $\leq n$.

This is one description of the category ${\rm MM}$ assuming the existence of a reasonable $t$-structure.
How can we describe objects of ${\rm MM}$ explicitly?
This question is what we want to study in this paper.

For simplicity of notations, we set
  \[\dmeff:={\rm DM}_{gm}^\eff(k,{\bf Q}),\quad \dm:=\dm(k,{\bf Q}), \quad {\bf L}={\bf 1}(1)[2].\]
  Here, ${\bf 1}$ denotes the object $M(k)$ of $\dmeff$.
\end{none}

\begin{none}\label{1.15}
Since ${\rm MM}$ should be an extension of the category of numerical motives (\cite{Jan3}), we may need to assume the K\"unneth type standard conjecture (\cite{Gro}).
Even if $k$ is not algebraically closed field, we can still formulate this conjecture as follows.
  \begin{enumerate}
    \item[$({\rm C}_{\leq n})$] For any $X\in  SmProj_{\leq n}$ and $d\geq 0$, there is a projector $p_d:M(X)\rightarrow M(X)$ such that the corresponding homomorphism
\[
H_{\et}^*(X_{\overline{k}},{\bf Q}_{\ell})\rightarrow H_{\et}^*(X_{\overline{k}},{\bf Q}_{\ell})
\]
is the composition
        \[H_{\et}^*(X_{\overline{k}},{\bf Q}_\ell)\rightarrow H_{\et}^d(X_{\overline{k}},{\bf Q}_\ell)\rightarrow H_{\et}^*(X_{\overline{k}},{\bf Q}_\ell).\]
        Here, $X_{\overline{k}}:=X\times_k \overline{k}$, and the first (resp.\ second) arrow is the obvious projection (resp.\ inclusion). We denote by $M_d(X)$ the image of $p_d$ in $\dmeff$, which exists since $\dmeff$ is a pseudo-abelian category.
\end{enumerate}
Then we can provide a description of ${\rm MM}$ inspired by the weight filtration \eqref{1.2.1} as follows.
\end{none}

\begin{df}\label{1.3}
Assume $({\rm C}_{\leq n})$.
  \begin{enumerate}[(1)]
    \item We denote by
    \[Gr_d{\rm MM}_{\leq n}\]
    the full subcategory of $\dmeff$ consisting of elements of the form $M[-d]$ where $M$ is a direct summand of $M_d(X)$ for some $X\in SmProj_{\leq n}$. An object of $Gr_d{\rm MM}_{\leq n}$ is called a {\it pure $n$-motive} of weight $d$.
    \item Set $W^{d}{\rm MM}_{\leq n}:=0$ for $d<0$. For $d\geq 0$, we inductively denote by
    \[W^d{\rm MM}_{\leq n}\]
     the full subcategory of $\dmeff$ consisting of objects $M$ such that there is a distinguished triangle
        \[M'\rightarrow M\rightarrow M''\rightarrow M'[1]\]
        in $\dmeff$ with $M'\in {\rm ob}\,Gr_d{\rm MM}_{\leq n}$ and $M''\in {\rm ob}\, W^{d-1}{\rm MM}_{\leq n}$. An object of $W^d{\rm MM}_{\leq n}$ is called an {\it $n$-motive of weights $\leq d$}.\
    \item We denote by
    \[{\rm MM}_{\leq n}\]
    the union of $W^d{\rm MM}_{\leq n}$ for $d\geq 0$. An object of ${\rm MM}_{\leq n}$ is called an {\it $n$-motive}.
  \end{enumerate}
\end{df}
\begin{df}\label{1.4}
  Assume $({\rm C}_{\leq n})$ for any $n$.
  \begin{enumerate}[(1)]
    \item We denote by
    \[W^d\mmeff\]
    the union of $W^d{\rm MM}_{\leq n}$ for $n\geq 0$. An object of $W^d\mmeff$ is called an {\it effective mixed motive of weights $\leq d$.}
    \item We denote by
    \[\mmeff\]
    the union of ${\rm MM}_{\leq n}$ for $n\geq 0$. An object of $\mmeff$ is called an {\it effective mixed motive}.
    \item We denote by
    \[{\rm MM}\]
     the full subcategory of $\dm$ consisting of objects of the form $M(r)$ for $M\in {\rm ob} \,\mmeff$ and $r\in {\bf Z}$. An object of ${\rm MM}$ is called a {\it mixed motive}.
  \end{enumerate}
  If one needs to emphasize that our coefficient ring is ${\bf Q}$, one may call a mixed motive as {\it mixed motive with ${\bf Q}$-coefficient}.
\end{df}

\begin{none}\label{1.5}
  Then we study formal properties of mixed motives. We first study the weight filtration functor $W^d:{\rm MM}_{\leq n}\rightarrow W^d{\rm MM}_{\leq n}$.
For this purpose, we may need to assume the Murre vanishing conjecture (\cite[Proposition 5.8]{Jan2}), which is as follows.  
\begin{enumerate}
    \item[(${\rm Mur}_{\leq n}$)] Assume $({\rm C}_{\leq n})$. For any $X,Y\in SmProj_{\leq n}$ and $d>e\geq 0$,
    \[{\rm Hom}(M_d(X),M_e(Y))=0.\]
\end{enumerate}
Then we construct the functor $W^d$ as follows and provide a definition of $n$-motives of weights $\geq d$.
\end{none}

\begin{thm}[Theorem \ref{3.7}]\label{1.6}
  Assume $({\rm C}_{\leq n})$ and $({\rm Mur}_{\leq n})$. Then the inclusion \[W^d{\rm MM}_{\leq n}\rightarrow {\rm MM}_{\leq n}\] admits a left adjoint denoted by $W^d:{\rm MM}_{\leq n}\rightarrow W^d{\rm MM}_{\leq n}$.
\end{thm}

\begin{df}\label{1.16}
Assume $({\rm C}_{\leq n})$ and $({\rm Mur}_{\leq n})$. We denote by
\[
W_d{\rm MM}_\leq n
\]
the full subcategory of ${\rm MM}_{\leq n}$ consisting of objects $M$ such that $W^{d-1}M=0$. Its object is called an $n$-motive of weights $\geq d$. If $M$ is in the intersection of $W_d{\rm MM}_\leq n$ and $W^e{\rm MM}_\leq n$, we say that $M$ is an $n$-motive of weights in $[d,e]$.
\end{df}

\begin{none}
 We also study the duality for mixed motives, which is the generalization of Cartier duality for 1-motives (\cite[1.5]{BV-S}).
\end{none}

\begin{thm}[Theorem \ref{4.2}]\label{1.7}
 Assume $({\rm C}_{\leq n})$ and $({\rm Mur}_{\leq n})$. Let $M$ be an $n$-motive of weights $\leq d$. Then $\underline{\rm Hom}(M,{\bf 1}(n))$ is an $n$-motive of weights $\geq {2n-d}$. Here, $\underline{\rm Hom}$ denotes the internal hom of $\dmeff$.
\end{thm}

\begin{none}
An object $M$ of ${\rm MM}_{\leq n}$ can be explicitly described as a commutative diagram
\[
\begin{tikzcd}
Gr_{2n}M\arrow[d]\arrow[rd,leftarrow,"{[1]}"]&
Gr_{2n-1}M\arrow[d]\arrow[rd,leftarrow,"{[1]}"]&
Gr_{2n-2}M\arrow[d]\arrow[rd,leftarrow,"{[1]}"]&
\,\arrow[rd,leftarrow,"{[1]}"]&
Gr_0M\arrow[rd,leftarrow,"{[1]}"]\arrow[d]
\\
M\arrow[r]&
W^{2n-1}M\arrow[r]&
W^{2n-2}M\arrow[r]&
\cdots\arrow[r]&
W^0M\arrow[r]&
0
\end{tikzcd}
\]
such that each triangle is a distinguished triangle and $Gr_dM$ is a pure $n$-motive of weight $d$ for any $d$.
Alternatively, $M$ can be described as a commutative diagram
\[
\begin{tikzcd}
0\arrow[r]\arrow[rd,leftarrow,"{[1]}"']&
W_{2n}M\arrow[r]\arrow[d]\arrow[rd,leftarrow,"{[1]}"']&
\cdots\arrow[r]\arrow[rd,leftarrow,"{[1]}"']&
W_2M\arrow[r]\arrow[d]\arrow[rd,leftarrow,"{[1]}"']&
W_1M\arrow[r]\arrow[d]\arrow[rd,leftarrow,"{[1]}"']&
M\arrow[d]
\\
&
Gr_{2n}M&
\,&
Gr_2M&
Gr_1M&
Gr_0M
\end{tikzcd}
\]
such that each triangle is a distinguished triangle.

In particular, if $n=1$, then $M$ is precisely a Deligne $1$-motive $[Gr_0M\rightarrow W_1M[1]]$ with a diagram
\[
\begin{tikzcd}
&
&
Gr_0M\arrow[d]
\\
0\arrow[r]&
Gr_{2}M[1]\arrow[r]&
W_1M[1]\arrow[r]&
Gr_1M[1]\arrow[r]&
0
\end{tikzcd}
\]
such that
\begin{enumerate}
\item[(i)] $Gr_0M$ is a pure $0$-motive (a.k.a. lattice),
\item[(ii)] $Gr_1M[1]$ is an abelian variety,
\item[(iii)] $Gr_2M[1]$ is a torus,
\item[(iv)] $W_1M[1]$ is a semi-abelian variety,
\item[(v)] the bottom row is an exact sequence.
\end{enumerate}
Thus our description is a generalization of Deligne $1$-motives.
\end{none}

\begin{none}
The category ${\rm MM}$ should be an abelian category containing the category of numerical motives.
We expect that the category of numerical motives is equivalent to the full subcategory of $\dm$ consisting of finite direct sums
\[
M_1\oplus \cdots \oplus M_r
\]
such that each $M_i$ is a pure $n_i$-motive of weight $d_i$ for some $n_i$ and $d_i$.
the category of numerical motives is semisimple by \cite{MR1150598}.
Hence we may need to assume additional conjectures to ensure that the above category is semisimple, which are as follows.
\begin{enumerate}
    \item[(${\rm wVan}_{\leq n}$)] Assume $({\rm C}_{\leq n})$. For any $X,Y\in SmProj_{\leq n}$ and $e>d\geq 0$,
    \[{\rm Hom}(M_d(X)[-d],M_e(Y)[-e])=0.\]
    \item[(${\rm Semi}_{\leq n}$)] Assume $({\rm C}_{\leq n})$. For any $X\in SmProj_{\leq n}$ and $d\geq 0$,
    \[{\rm Hom}(M_d(X),M_d(X))\]
    is a finite dimensional semisimple ring.
\end{enumerate}
Then we have the following result.
\end{none}
\begin{thm}[Theorem \ref{5.2}]\label{1.8}
Assume $({\rm C}_{\leq n})$, $({\rm Mur}_{\leq n})$, $({\rm wVan}_{\leq n})$, and $({\rm Semi}_{\leq n})$.
Then ${\rm MM}_{\leq n}$ is an abelian category.
\end{thm}

\begin{none}
In \cite{MR1225267}, the following conjecture is introduced.
\begin{enumerate}
    \item[$({\rm CK}_{\leq n})$] For any $X\in  SmProj_{\leq n}$, there are projectors $p_0,\ldots,p_{2n}:M(X)\rightarrow M(X)$ satisfying the condition in $({\rm C}_{\leq n})$ such that
    \[p_i\circ p_j=0\]
    if $i\neq j$.
\end{enumerate}
 If $({\rm CK}_{\leq n})$ holds, then for any $X\in SmProj_{\leq n}$, there is a decomposition
  \[M(X)=M_0(X)\oplus \cdots \oplus M_{2n}(X)\]
  in $\dmeff$,
which is called the {\it Chow-K\"unneth} decomposition.
This decomposition is {\it not} functorial.
Set
  \[M_{\leq d}(X):=M_0(X)\oplus \cdots \oplus M_d(X)\]
  for each $d$.
We show that the morphism $M(X)\rightarrow M_{\leq d}(X)$ induced by the decomposition is functorial assuming $({\rm CK}_{\leq n})$, $({\rm Mur}_{\leq n})$, and the following conjecture.
\begin{enumerate}
    \item[(${\rm wVan}_{\leq n}'$)] Assume $({\rm C}_{\leq n})$. For any $X,Y\in SmProj_{\leq n}$ and $d>e\geq 0$,
    \[{\rm Hom}(M_d(X),M_e(Y)[-1])=0.\]
\end{enumerate}
\end{none}

\begin{none}\label{1.9}
  One of the motivations for mixed motives is to construct a sort of universal cohomology for schemes not necessarily projective over $k$. Assuming $({\rm C}_{\leq n})$, we construct the $\ell$-adic realization functor
  \[R_\ell:{\rm MM}_{\leq n}\rightarrow {\rm Rep}_{{\rm Gal}(\overline{k}/k)}({\bf Q}_\ell).\]
  Here, ${\rm Rep}_{{\rm Gal}(\overline{k}/k)}({\bf Q}_\ell)$ denotes the category of ${\bf Q}_\ell$-representations of ${\rm Gal}(\overline{k}/k)$.
  
Let $U$ be a scheme smooth and separated over $k$ whose dimension is $\leq n$.
To study the $\ell$-adic cohomology using ${\rm MM}_{\leq n}$, we need to associate $n$-motives
\begin{equation}
\label{1.9.1}
M_0(U),\ldots,M_{2n}(U),M_0^c(U),\ldots,M_{2n}^c(U).
\end{equation}
Let us introduce a weaker version of $({\rm Semi}_{\leq n})$, which is as follows.
\begin{enumerate}
    \item[$({\rm Semi}_{\leq n}')$] Assume $({\rm Semi}_{\leq n-1})$. For any $d$ and morphism $f:M\rightarrow N$ in ${\rm MM}_{\leq n}$ such that
    \begin{enumerate}[(i)]
      \item ($d<2n-2$ and $M$ is an $(n-1)$-motive of weights $\leq d$) or ($d=2n-2$ and $M$ is a pure $(n-1)$-motive of weight $d$),
      \item $N$ is a pure $n$-motive of weight $d$,
    \end{enumerate}
    the kernel, cokernel, and image of $f$ exist, and the cokernel of $f$ is a pure $n$-motive of weight $d$.
\end{enumerate}
We also use the following conjectures.
\begin{enumerate}
    \item[(${\rm Res}_{\leq n}$)] Resolution of singularities holds for any integral scheme separated over $k$ whose dimension is $\leq n$.
\end{enumerate}
Assuming several conjectures, we can construct the motives \eqref{1.9.1} and show that ${\rm MM}_{\leq n}$ can be considered a universal cohomology theory.
For any  purely $n$-dimensional scheme $X$ smooth over $k$ and $d\in {\bf Z}$, we adopt the \'etale homology notation
\[
H_d^{\et}(X_{\overline{k}},{\bf Q}_\ell):=H_{\et,c}^{2n-d}(X_{\overline{k}},{\bf Q}_{\ell})(n),\quad H_{d,c}^{\et}(X_{\overline{k}},{\bf Q}_\ell):=H_{\et}^{2n-d}(X_{\overline{k}},{\bf Q}_{\ell})(n).
\]
A precise statement is as follows.
\end{none}
\begin{thm}[Theorem \ref{7.7}]\label{1.10}
  Assume $({\rm CK}_{\leq n})$, $({\rm Mur}_{\leq n})$, $({\rm wVan}_{\leq n}')$, $({\rm Res}_{\leq n})$, and $({\rm Semi}_{\leq n}')$. Then for any integral scheme $U$ smooth over $k$ whose dimension is $\leq n$,
  \[R_\ell(M_d(U)[-d])\cong H_d^{\et}(U_{\overline{k}},{\bf Q}_{\ell}),\quad R_\ell(M_d^c(U)[-d])\cong H_{d,c}^{\et}(U_{\overline{k}},{\bf Q}_{\ell}).\]
\end{thm}
\begin{none}\label{1.11}
If $n=2$, then $({\rm CK}_{\leq 2})$ is known (\cite{Mur}). Thus we have an unconditional construction of ${\rm MM}_{\leq 2}$. Moreover, $({\rm Mur}_{\leq 2})$ is known (\cite[Theorem 7.3.10]{KMP}), $({\rm Res}_{\leq 2})$ is known, and we prove $({\rm Semi}_{\leq 2}')$ and $({\rm wVan}_{\leq 2}')$. Thus we get the following result
\end{none}
\begin{thm}[Theorem \ref{8.6}]\label{1.13}
  When $n=2$, the conclusions of {\rm Theorems \ref{1.6}, \ref{1.7}, and \ref{1.10}} hold.
\end{thm}

\begin{none}\label{1.14}
  Thus we have an unconditional construction of ${\rm MM}_{\leq 2}$ and its several properties. However, since $({\rm wVan}_{\leq 2})$ and $({\rm Semi}_{\leq 2})$ are open, in Theorem \ref{1.8}, we have only a conditional proof that ${\rm MM}_{\leq 2}$ is an abelian category.
  
  In \cite{Ayo}, Ayoub introduced $2$-motives using the $2$-motivic $t$-structure. The advantage of his definition is that the category of Ayoub $2$-motives is abelian. However, it requires at least some vanishing conjectures to show that $M_2(X)[-2]$, $M_3(X)[-3]$, and $M_4(X)[-4]$ are Ayoub $2$-motives where $X$ is a surface smooth and projective over $k$. Our conjecture is that the category of our $2$-motives is a full subcategory of the category of Ayoub $2$-motives.
  
For general $n$, even though the conclusion of Theorem \ref{1.10} depends on several conjectures, there is a chance that for a specific $U$, the motives \eqref{1.9.1} are easy to construct.
Then the argument in Theorem \ref{1.10} can be used to such a case.
\end{none}
\begin{none}\label{1.12}
  {\it Organizations.} In Section 2, we define the weight filtration, and we study its functoriality.
  In Section 3, we study the dual $\underline{\rm Hom}(M,{\bf 1}(n))$ of an $n$-motive $M$.
  In Section 4, we provide a conditional proof that ${\rm MM}_{\leq n}$ is an abelian category.
  In Section 5, we construct motives $M_0(U),\ldots,M_{2m}(U)$ and $M_0^c(U),\ldots,M_{2m}(U)$ in ${\rm MM}_{\leq n}$ for any scheme $U$ smooth and separated over $k$ whose dimension is $\leq n$.
  In Section 6, we define the $\ell$-adic realization functor, and we compare $M_d(U)[-d]$ with the $\ell$-adic cohomology theory.
  In Section 7, we discuss $2$-motives.
\end{none}

\section{Weight filtration}
\begin{lemma}\label{3.2}
  Let $\mathscr{T}$ be a triangulated category, and let
  \begin{equation}\label{3.2.1}\begin{tikzcd}
    M'\arrow[r,"u"]&M\arrow[d,"f"]\arrow[r,"v"]&M''\arrow[r,"w"]&M'[1]\\
    N'\arrow[r,"u'"]&N\arrow[r,"v'"]&N''\arrow[r,"w'"]&N'[1]
  \end{tikzcd}\end{equation}
  be a diagram in $\mathscr{T}$ such that each row is a distinguished triangle. Assume that
\[
{\rm Hom}(M',N'')=0,\;{\rm Hom}(M',N''[-1])=0.
\]
Then we have the following results.
  \begin{enumerate}[{\rm (1)}]
    \item There is a unique morphism $f':M'\rightarrow N'$ making the above diagram commutative.
    \item There is a unique morphism $f'':M''\rightarrow N''$ making the above diagram commutative.
    \item There is a unique pair of morphisms $(f':M'\rightarrow N',f'':M''\rightarrow N'')$ making the above diagram into a morphism of distinguished triangles in $\dmeff$.
  \end{enumerate}
\end{lemma}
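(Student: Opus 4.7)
The plan is to exploit the two vanishing hypotheses to force existence and uniqueness from the long exact sequences attached to the two rows. The central observation is that the composite $v'\circ f\circ u:M'\to N''$ is an element of ${\rm Hom}(M',N'')=0$ and therefore vanishes.

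For part (1), I would argue that since $v'\circ(f\circ u)=0$ and the bottom row is distinguished, the map $f\circ u$ lifts through $u'$ to produce some $f':M'\to N'$ with $u'\circ f'=f\circ u$. For uniqueness, the difference $\delta:=f_{1}'-f_{2}'$ of two such lifts satisfies $u'\circ\delta=0$; the long exact sequence obtained by applying ${\rm Hom}(M',-)$ to the bottom triangle shows that $\delta$ comes from ${\rm Hom}(M',N''[-1])=0$, so $\delta=0$.

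For part (2), the argument is dual: since $(v'\circ f)\circ u=0$ and the top row is distinguished, $v'\circ f$ factors through $v$, yielding $f'':M''\to N''$ with $f''\circ v=v'\circ f$. The difference of two such $f''$ kills $v$, hence comes from ${\rm Hom}(M'[1],N'')\cong {\rm Hom}(M',N''[-1])=0$ by the long exact sequence associated with the top triangle, so again the difference vanishes.

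For part (3), I would apply axiom (TR3) to the commutative left square produced in (1): this yields some morphism of distinguished triangles whose middle component is $f$ and whose third component, call it $\widetilde{f}''$, satisfies both $\widetilde{f}''\circ v=v'\circ f$ and $w'\circ\widetilde{f}''=f'[1]\circ w$. By the uniqueness proved in (2), $\widetilde{f}''$ coincides with $f''$, so $(f',f'')$ is automatically a morphism of triangles, and global uniqueness of the pair is then immediate from (1) and (2) taken separately. I do not anticipate any substantive obstacle; the only mildly delicate point is invoking (TR3) exactly once in part (3) to obtain for free the compatibility with the connecting morphisms $w$ and $w'$, rather than trying to establish it directly from the vanishing hypotheses.
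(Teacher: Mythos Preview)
Your proof is correct and follows essentially the same approach as the paper: both arguments use the long exact sequences obtained by applying ${\rm Hom}(M',-)$ to the bottom triangle for (1) and ${\rm Hom}(-,N'')$ to the top triangle for (2), with (3) handled by invoking (TR3) once and then appealing to the uniqueness in (2). The only cosmetic difference is that the paper phrases (1) and (2) as a single observation that the relevant map in the long exact sequence is an isomorphism, whereas you separate existence and uniqueness.
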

\begin{proof}
  (1) Consider the exact sequence
  \[{\rm Hom}(M',N''[-1])\rightarrow {\rm Hom}(M',N')\rightarrow {\rm Hom}(M',N)\rightarrow {\rm Hom}(M',N'')\]
  of abelian groups. Since ${\rm Hom}(M',N'')=0$ and ${\rm Hom}(M',N''[-1])=0$, the second arrow is an isomorphism. Thus there is a unique morphism $f':M'\rightarrow N'$ such that $u'f'=fu$.\\[5pt]
  (2) Consider the exact sequence
  \[{\rm Hom}(M'[1],N'')\rightarrow {\rm Hom}(M'',N'')\rightarrow {\rm Hom}(M,N'')\rightarrow {\rm Hom}(M',N'')\]
  of abeilan groups. Since ${\rm Hom}(M',N'')=0$ and ${\rm Hom}(M',N''[-1])=0$, the second arrow is an isomorphism. Thus there is a unique morphism $f'':M''\rightarrow N''$ such that $f''v=v'f$.\\[5pt]
  (3) Choose $f'$ as in (1). Since each row in \eqref{3.2.1} is a distinguished triangle, there is a morphism $f'':M''\rightarrow N''$ making \eqref{3.2.1} into a morphism of distinguished triangles in $\dmeff$. The uniqueness of $f''$ follows from (2).
\end{proof}
\begin{lemma}\label{3.14}
  Under the notations and hypotheses of {\rm Lemma {\ref{3.2}}}, if $f$ is an isomorphism, then $f'$ and $f''$ are isomorphisms.
\end{lemma}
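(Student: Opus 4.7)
The strategy is to construct an inverse to the morphism of triangles $(f',f,f'')$ and then invoke the uniqueness assertion of Lemma~\ref{3.2}(3) to verify that it really is inverse. The candidate middle piece of the inverse is $f^{-1}:N\to M$; I would first extend it to a full morphism of distinguished triangles $(g',f^{-1},g''):(N',N,N'')\to(M',M,M'')$. By TR3 this reduces to finding $g':N'\to M'$ with $u\circ g'=f^{-1}\circ u'$, which in turn is equivalent to vanishing of the obstruction $v\circ f^{-1}\circ u'\in\mathrm{Hom}(N',M'')$. Precomposing with $f'$ gives $(v\circ f^{-1}\circ u')\circ f'=v\circ u=0$, so the obstruction dies after precomposition with $f'$.

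To promote this to the vanishing of the obstruction itself, I would apply the octahedral axiom to the compositions $M'\xrightarrow{f'}N'\xrightarrow{u'}N$ and $M'\xrightarrow{u}M\xrightarrow{f}N$. Since $f$ is an isomorphism, $\mathrm{cone}(f)=0$, and combining the two octahedra produces a distinguished triangle
\[\mathrm{cone}(f')\to M''\xrightarrow{\tilde f''}N''\to \mathrm{cone}(f')[1],\]
where the middle arrow $\tilde f''$ is identified with $f''$ by the uniqueness from Lemma~\ref{3.2}(3). Applying $\mathrm{Hom}(-,M'')$ to the cone triangle $M'\xrightarrow{f'}N'\to \mathrm{cone}(f')\to M'[1]$ and using the hypothesis vanishing $\mathrm{Hom}(M',N'')=0=\mathrm{Hom}(M',N''[-1])$ (transported through the triangle above) yields the injectivity of the precomposition map $\mathrm{Hom}(N',M'')\to\mathrm{Hom}(M',M'')$, from which $v\circ f^{-1}\circ u'=0$ follows.

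Having constructed $(g',f^{-1},g'')$, the composition $(f'\circ g',\mathrm{id}_N,f''\circ g'')$ is a morphism of triangles $(N',N,N'')\to(N',N,N'')$ extending $\mathrm{id}_N$, and similarly $(g'\circ f',\mathrm{id}_M,g''\circ f'')$ extends $\mathrm{id}_M$. Uniqueness from Lemma~\ref{3.2}(3), applied once the appropriate vanishing is transferred along $f$ and the newly produced maps $g',g''$, forces both compositions to be the identity triples. Hence $f'g'=\mathrm{id}_{N'}$, $g'f'=\mathrm{id}_{M'}$, and similarly for $f''$, so $f'$ and $f''$ are isomorphisms with inverses $g'$ and $g''$. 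The main obstacle is the careful octahedral bookkeeping to identify $\tilde f''$ with $f''$ and to extract the needed injectivity, after which the remainder is a formal consequence of Lemma~\ref{3.2}(3).
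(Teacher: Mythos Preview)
Your argument takes a substantially different route from the paper's and contains a gap at the key step. The paper's proof is far shorter: it simply applies Lemma~\ref{3.2} once more to $f^{-1}:N\to M$ to produce a morphism of distinguished triangles $(g',f^{-1},g'')$ in the opposite direction, and then observes that both $(g'f',\mathrm{id}_M,g''f'')$ and $(\mathrm{id}_{M'},\mathrm{id}_M,\mathrm{id}_{M''})$ extend $\mathrm{id}_M$ to a morphism of triangles, so they coincide by the uniqueness clause of Lemma~\ref{3.2}(3); the same argument with $\mathrm{id}_N$ gives $f'g'=\mathrm{id}_{N'}$ and $f''g''=\mathrm{id}_{N''}$. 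No octahedra, no cone computations.

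Your octahedral detour to construct $g'$ by first annihilating the obstruction $v\circ f^{-1}\circ u'\in\mathrm{Hom}(N',M'')$ does not go through as written. The decisive step is your claim that the hypotheses $\mathrm{Hom}(M',N'')=0=\mathrm{Hom}(M',N''[-1])$, once ``transported through'' the triangle $\mathrm{cone}(f')\to M''\xrightarrow{f''} N''$, force injectivity of $(f')^*:\mathrm{Hom}(N',M'')\to\mathrm{Hom}(M',M'')$. But the vanishing hypotheses give information about the \emph{covariant} functor $\mathrm{Hom}(M',-)$, whereas injectivity of $(f')^*$ is a statement about the \emph{contravariant} functor $\mathrm{Hom}(-,M'')$. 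Concretely, applying $\mathrm{Hom}(-,M'')$ to the triangle $M'\to N'\to\mathrm{cone}(f')$ shows that $\ker\bigl((f')^*\bigr)$ is the image of $\mathrm{Hom}(\mathrm{cone}(f'),M'')\to\mathrm{Hom}(N',M'')$, and your transport gives no control over $\mathrm{Hom}(\mathrm{cone}(f'),M'')$: feeding the hypotheses into the triangle $\mathrm{cone}(f')\to M''\to N''$ only yields the isomorphism $\mathrm{Hom}(M',\mathrm{cone}(f'))\cong\mathrm{Hom}(M',M'')$, which is on the wrong side of the Hom. So the obstruction is not shown to vanish, and the construction of $g'$ collapses. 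The paper avoids this entirely by invoking Lemma~\ref{3.2} for $f^{-1}$ rather than trying to build $g'$ by hand.
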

\begin{proof}
  By Lemma \ref{3.2}, we also have a morphism
  \[\begin{tikzcd}
    N'\arrow[r,"u'"]\arrow[d,"g'"']&N\arrow[d,"f^{-1}"]\arrow[r,"v'"]&N''\arrow[d,"g''"]\arrow[r,"w'"]&N'[1]\arrow[d,"{f'[1]}"]\\
    M'\arrow[r,"u"]&M\arrow[r,"v"]&M''\arrow[r,"w"]&M'[1]
  \end{tikzcd}\]
  of distinguished triangles in $\mathscr{T}$. Consider the diagram
  \[\begin{tikzcd}
    M'\arrow[r,"u"]&M\arrow[d,"{\rm id}"]\arrow[r,"v"]&M''\arrow[r,"w"]&M'[1]\\
    M'\arrow[r,"u"]&M\arrow[r,"v"]&M''\arrow[r,"w"]&M[1]
  \end{tikzcd}\]
  in $\mathscr{T}$. Then both
\[(g'f':M'\rightarrow M',\;g''f'':M''\rightarrow M''),\;({\rm id}:M'\rightarrow M',\;{\rm id}:M''\rightarrow M'')\]
 make the above diagram into a morphism of distinguished triangles in $\mathscr{T}$. Thus by Lemma \ref{3.2}, $g'f'={\rm id}$ and $g''f''={\rm id}$. By the same argument, $f'g'={\rm id}$ and $f''g''={\rm id}$. Thus $f'$ and $f''$ are isomorphisms.
\end{proof}
\begin{lemma}\label{3.3}
  Let $\mathscr{T}$ be a triangulated category, and let
  \begin{equation}\label{3.3.1}\begin{tikzcd}
    M'\arrow[r,"u"]\arrow[d,"f'"']&M\arrow[r,"v"]&M''\arrow[r,"w"]\arrow[d,"f''"]&M'[1]\arrow[d,"{f'[1]}"]\\
    N'\arrow[r,"u'"]&N\arrow[r,"v'"]&N''\arrow[r,"w'"]&N'[1]
  \end{tikzcd}\end{equation}
  be a commutative diagram in $\mathscr{T}$. If
  \[{\rm Hom}(M',N'')=0,\quad {\rm Hom}(M',N''[-1])=0,\quad {\rm Hom}(M'',N')=0,\]
  then the above diagram can be uniquely extended to a morphism
  \[\begin{tikzcd}
    M'\arrow[r,"u"]\arrow[d,"f'"']&M\arrow[d,"f"]\arrow[r,"v"]&M''\arrow[d,"f''"]\arrow[r,"w"]&M'[1]\arrow[d,"{f'[1]}"]\\
    N'\arrow[r,"u'"]&N\arrow[r,"v'"]&N''\arrow[r,"w'"]&N'[1]
  \end{tikzcd}\]
  of distinguished triangles in $\mathscr{T}$.
\end{lemma}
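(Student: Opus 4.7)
The plan is to split the argument into existence, which follows from the rotation axiom (TR3) of a triangulated category, and uniqueness, which is a short diagram chase against the three vanishing hypotheses.

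For existence, I would rotate both distinguished triangles to
\[M''\xrightarrow{w}M'[1]\xrightarrow{-u[1]}M[1]\xrightarrow{-v[1]}M''[1]\]
and the analogous rotation of the $N$-row. The only commutativity encoded in the given diagram is $w'f''=f'[1]\circ w$, which says precisely that $(f'',f'[1])$ forms a commutative square over the first two arrows of these rotated triangles. Axiom (TR3) then produces a morphism $g\colon M[1]\to N[1]$ completing this to a morphism of rotated triangles; setting $f:=g[-1]$ and desuspending, the two remaining commutative squares of the rotated morphism give exactly $fu=u'f'$ and $v'f=f''v$. No vanishing hypothesis is needed for this step.

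For uniqueness, suppose $f_1,f_2\colon M\to N$ both complete the diagram, and set $g:=f_1-f_2$, so $gu=0$ and $v'g=0$. The long exact sequence
\[{\rm Hom}(M,N')\to{\rm Hom}(M,N)\to{\rm Hom}(M,N'')\]
from the bottom triangle, together with $v'g=0$, yields $g=u'k$ for some $k\colon M\to N'$. Composing with $u$ gives $u'ku=0$, and the sequence
\[{\rm Hom}(M',N''[-1])\to{\rm Hom}(M',N')\to{\rm Hom}(M',N),\]
together with ${\rm Hom}(M',N''[-1])=0$, forces $ku=0$. Finally, the top triangle furnishes
\[{\rm Hom}(M'',N')\to{\rm Hom}(M,N')\to{\rm Hom}(M',N'),\]
and the hypothesis ${\rm Hom}(M'',N')=0$ then forces $k=0$, hence $g=0$.

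The argument poses no serious obstacle; the only step requiring care is tracking the sign conventions under rotation in the existence step so that the map $g$ produced by (TR3) on the rotated triangles desuspends correctly to a middle morphism of the original pair of triangles. The uniqueness computation is routine once the three long exact sequences are organized in the right order.
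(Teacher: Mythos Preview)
Your proof is correct and follows essentially the same route as the paper: existence by (TR3) on the rotated triangles (the paper says only ``such an $f$ exists since the rows are distinguished triangles''), and uniqueness by factoring $f_1-f_2$ through $u'$, then showing the factor $k$ vanishes via two long exact sequences. The paper packages your step ``$u'ku=0\Rightarrow ku=0$'' as an appeal to its Lemma~3.2, but unwound it is exactly your injectivity argument using ${\rm Hom}(M',N''[-1])=0$. A small bonus of your unpacked version is that it makes visible that the hypothesis ${\rm Hom}(M',N'')=0$ is never actually used; only ${\rm Hom}(M',N''[-1])=0$ and ${\rm Hom}(M'',N')=0$ are needed.
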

\begin{proof}
  Note that such an $f$ exists since the rows are distinguished triangles. Hence it remains to show that $f$ is unique. Let $f,f_0:M\rightarrow N$ be morphisms making \eqref{3.3.1} still commutative. Then
\[
v'(f-f_0)=v'f-v'f_0=f''v-f''v=0,
\]
so $f-f_0=u'g$ for some morphism $g:M\rightarrow N'$ in $\mathscr{T}$.
Thus
  \[u'gu=(f-f_0)u=fu-f_0u=u'f'-u'f'=0.\]
  Consider the commutative diagram
  \[\begin{tikzcd}
    M'\arrow[r,"u"]\arrow[d,"gu"']&M\arrow[d,"0"]\arrow[r,"v"]&M''\arrow[d,"0"]\arrow[r,"w"]&M'[1]\arrow[d,"{gu[1]}"]\\
    N'\arrow[r,"u'"]&N\arrow[r,"v'"]&N''\arrow[r,"w'"]&N'[1]
  \end{tikzcd}\]
  in $\mathscr{T}$. By Lemma \ref{3.2}, $gu=0$. Then $g=hv$ for some morphism $h:M''\rightarrow N'$ in $\mathscr{T}$. Since ${\rm Hom}(M'',N')=0$, $h$ should be $0$. Thus $f=f_0$.
\end{proof}
\begin{prop}\label{3.15}
  Assume $(C_{\leq n})$. For any $X,Y\in SmProj_{\leq n}$, $d,e\geq 0$, and $p>0$,
  \[{\rm Hom}(M_d(X),M_e(Y)[p])=0.\]
\end{prop}
\begin{proof}
  We may assume that $Y$ is connected. Since $M_d(X)$ (resp.\ $M_e(Y)$) is a direct summand of $M(X)$ (resp.\ $M(Y)$), it suffices to show that
  \[{\rm Hom}(M(X),M(Y)[p])=0.\]
  Then it suffices to show that
  \[{\rm Hom}(M(X\times Y),{\bf 1}(r)[2r+p])=0\]
  where $r$ is the dimension of $Y$ by \cite[Theorem 16.24]{MVW} (see \cite{Kel} to remove the assumption of resolution of singularity in our case). This follows from \cite[Vanishing Theorem 19.3]{MVW}.
\end{proof}
\begin{prop}\label{3.4}
  Assume $(C_{\leq n})$ and $({\rm Mur}_{\leq n})$. Let $M$ be a pure $n$-motive of weight $d$, and let $N$ be a pure $n$-motive of weight $e$. If $d>e$, then
  \[{\rm Hom}(M,N)=0,\quad {\rm Hom}(M,N[-1])=0.\]
\end{prop}
\begin{proof}
  Choose varieties $X$ and $Y$ projective over $k$ with dimensions $\leq n$ such that $M$ (resp.\ $N$) is a direct summand of $M_d(X)[-d]$ (resp.\ $M_e(Y)[-e]$). Then it suffices to show that
  \[{\rm Hom}(M_d(X),M_e(Y)[d-e])=0,\quad {\rm Hom}(M_d(X),M_e(Y)[d-e-1])=0.\]
  The first one follows from Proposition \ref{3.15}. The second one follows from Proposition \ref{3.15} if $d>e+1$ and from $({\rm Mur}_{\leq n})$ if $d=e+1$.
\end{proof}
\begin{none}\label{3.5}
  For each $n$-motive $M$, by definition, we can choose morphisms
  \[M=W^{2n}M\rightarrow W^{2n-1}M\rightarrow \cdots \rightarrow W^{-1}M=0\]
  in $\dmeff$ such that a cone of $W^dM\rightarrow W^{d-1}M$ is a pure $n$-motive of weight $d$.
\end{none}
\begin{prop}\label{3.6}
  Assume $({\rm C}_{\leq n})$ and $({\rm Mur}_{\leq n})$. Let $M$ be a pure $n$-motive of weight $d$, and let $N$ be an $n$-motive of weights $\leq e$. If $d>e$, then
  \[{\rm Hom}(M,N)=0,\quad {\rm Hom}(M,N[-1])=0.\]
\end{prop}
\begin{proof}
  Choose morphisms $W^{2n}N\rightarrow \cdots \rightarrow W^{-1}N$ as above. By Proposition \ref{3.4}, the homomorphisms
  \[{\rm Hom}(M,W^dN)\rightarrow {\rm Hom}(M,W^{d-1}N),\]
\[
{\rm Hom}(M,W^dN[-1])\rightarrow {\rm Hom}(M,W^{d-1}N[-1])
\]
  are injective for each $d$ since a cocone of $W^dN\rightarrow W^{d-1}N$ is a pure $n$-motive of weight $d$. Thus
  \[{\rm Hom}(M,N)={\rm Hom}(M,W^{2n}N)\rightarrow {\rm Hom}(M,W^{-1}N)= 0,\]
  \[{\rm Hom}(M,N[-1])={\rm Hom}(M,W^{2n}N[-1])\rightarrow {\rm Hom}(M,W^{-1}N[-1])= 0\]
are injective, so we are done.
\end{proof}
\begin{thm}\label{3.7}
   Assume $({\rm C}_{\leq n})$ and $({\rm Mur}_{\leq n})$. The inclusion functor \[W^d {\rm MM}_{\leq n}\rightarrow {\rm MM}_{\leq n}\] admits a left adjoint denoted by $W^d:{\rm MM}_{\leq n}\rightarrow W^d {\rm MM}_{\leq n}$.
\end{thm}
\begin{none}
  We will complete the proof of Theorem \ref{3.7} in \ref{3.10}. For each $n$-motive, set $W^dM$ as in \ref{3.5}. We will construct a functor structure on $W^d$. Let $f:M\rightarrow N$ be a morphism of $n$-motives. We will show that there is a unique set of morphisms
  \[\{W^{2n-1}f:W^{2n-1}M\rightarrow W^{2n-1}N,\ldots,W^{0}f:W^0M\rightarrow W^0N\}\]
  in $\dmeff$ such that the induced diagram
  \[\begin{tikzcd}
    M=W^{2n}M\arrow[d,"f"']\arrow[r]&W^{2n-1}M\arrow[d,"W^{2n-1}"]\arrow[r]&\cdots\arrow[r]&W^dM\arrow[d,"W^df"]\\
    N=W^{2n}N\arrow[r]&W^{2n-1}N\arrow[r]&\cdots\arrow[r]&W^dN
  \end{tikzcd}\]
  of $n$-motives commutes.

  For sufficiently large $d$, $W^dM\cong M$ and $W^dN\cong N$, so the above claim holds for such a $d$. Let us use an induction on $d$. Assume that the above claim holds for $d$. Let $Gr_d M$ (resp.\ $Gr_dN$) be a cocone of $W^dM\rightarrow W^{d-1}M$ (resp.\ $W^dN\rightarrow W^{d-1}N$). Consider the induced diagram
  \[\begin{tikzcd}
    Gr_dM\arrow[r]&W^dM\arrow[d,"W^df"]\arrow[r]&W^{d-1}M\arrow[r]&Gr_dM[1]\\
    Gr_dN\arrow[r]&W^dN\arrow[r]&W^{d-1}N\arrow[r]&Gr_dN[1]
  \end{tikzcd}\]
  of $n$-motives. By Proposition \ref{3.6},
  \[{\rm Hom}(Gr_dM,W^{d-1}N)=0,\quad {\rm Hom}(Gr_dM,W^{d-1}N[-1])=0.\]
  Thus by Lemma \ref{3.2}, the above diagram can be uniquely extended to a morphism of distinguished triangles:
    \[\begin{tikzcd}
    Gr_dM\arrow[r]\arrow[d]&W^dM\arrow[d,"W^df"]\arrow[r]&W^{d-1}M\arrow[d,"W^{d-1}f"]\arrow[r]&Gr_dM[1]\arrow[d]\\
    Gr_dN\arrow[r]&W^dN\arrow[r]&W^{d-1}N\arrow[r]&Gr_dN[1]
  \end{tikzcd}\]
  Thus the above claim holds for $d-1$. This completes the induction process. Now the functoriality of $W^d$ follows from the uniqueness.
\end{none}
\begin{prop}\label{3.9}
   Assume $({\rm C}_{\leq n})$ and $({\rm Mur}_{\leq n})$. Let $M$ be an $n$-motive, and let $N$ be an $n$-motive of weights $\leq e$.
Suppose that $M$ admits a sequence of morphisms
\[M=W^{2n}M\rightarrow \cdots \rightarrow W^{d-1}M=0\]
such that for each $r\geq d$, a cocone of $W^rM\rightarrow W^{r-1}M$ is a pure $n$-motive of weight $r$.
If $d>e$, then
  \[{\rm Hom}(M,N)=0,\quad {\rm Hom}(M,N[-1])=0.\]
\end{prop}
\begin{proof}
By Proposition \ref{3.6}, the homomorphisms
  \[{\rm Hom}(W^rM,N)\rightarrow {\rm Hom}(W^{r-1}M,N)\]
    \[{\rm Hom}(W^rM,N[-1])\rightarrow {\rm Hom}(W^{r-1}M,N[-1])\]
  are injective since $r\geq d>e$.
Thus
  \[{\rm Hom}(M,N)={\rm Hom}(W^{2n}M,N)\rightarrow {\rm Hom}(W^{d-1}M,N)=0,\]
  \[{\rm Hom}(M,N[-1])={\rm Hom}(W^{2n}M,N[-1])\rightarrow {\rm Hom}(W^{d-1}M,N[-1])=0\]
are injective, so we are done.
\end{proof}
\begin{none}\label{3.10}
  {\it Proof of Theorem {\ref{3.7}}.}
  It remains to show that $W^d$ is the left adjoint of the inclusion functor. For any $n$-motive $L$ of weights $\leq d$, it suffices to show that the induced homomorphism
  \[{\rm Hom}(W^dM,L)\rightarrow {\rm Hom}(M,L)\]
  is an isomorphism. By Proposition \ref{3.9}, it suffices to show that a cocone $N$ of $M\rightarrow W^dM$ admits a sequence of morphisms
\begin{equation}
\label{3.10.1}
N=W^{2n}N\rightarrow \cdots \rightarrow W^{d}N=0
\end{equation}
such that for each $e\geq d$, a cocone of $W^{e+1}N\rightarrow W^{e}N$ is a pure $n$-motive of weight $r$.

  For $e\geq d$, let $W^eN$ be a cocone of $W^eM\rightarrow W^dM$. By the octahedral axiom, there is a commutative diagram
  \[\begin{tikzcd}
    Gr_eM\arrow[d]\arrow[r,"{\rm id}"]&Gr_eM\arrow[d]\arrow[r]&0\arrow[d]\arrow[r]&Gr_eM[1]\arrow[d]\\
    W^{e+1}N\arrow[d]\arrow[r]&W^{e+1}M\arrow[d]\arrow[r]&W^dM\arrow[d,"{\rm id}"]\arrow[r]&W^{e+1}N[1]\arrow[d]\\
    W^eN\arrow[r]\arrow[d]&W^eM\arrow[r]\arrow[d]&W^dM\arrow[r]\arrow[d]&W^eN[1]\arrow[d]\\
    Gr_eM[1]\arrow[r,"{\rm id}"]&Gr_eM[1]\arrow[r]&0\arrow[r]&Gr_eM[2]
  \end{tikzcd}\]
  in $\dmeff$ such that each row and column is a distinguished triangle. Thus we obtain \eqref{3.10.1}, and a cocone of each $W^{e+1}N\rightarrow W^eN$ is isomorphic to $Gr_eM$, which is a pure $n$-motive of weight $e$.\qed
\end{none}
Now see Definition \ref{1.16} for the definition of $W_d{\rm MM}_{\leq n}$, $n$-motives of weights $\geq d$, and $n$-motives of weights in $[d,e]$.
\begin{prop}\label{3.11}
  Assume $({\rm C}_{\leq n})$ and $({\rm Mur}_{\leq n})$.
Then the inclusion functor
\[
W_d{\rm MM}_{\leq n}\rightarrow {\rm MM}_{\leq n}
\]
admits a right adjoint denoted by
\[
W_d:{\rm MM}_{\leq n}\rightarrow W_d {\rm MM}_{\leq n}.
\]
\end{prop}
\begin{proof}
  For any $n$-motive $M$, choose a cone of $M\rightarrow W^{d-1}M$, and let us denote it by $W_dM$. We will show that $M\mapsto W_dM$ has a functor structure. Let $f:M\rightarrow N$ be a morphism of $n$-motives.  Consider the induced diagram
  \[\begin{tikzcd}
    W_dM\arrow[r]&M\arrow[d,"f"]\arrow[r]&W^{d-1}M\arrow[r]&W_dM[1]\\
    W_dN\arrow[r]&N\arrow[r]&W^{d-1}N\arrow[r]&W_dN[1]
  \end{tikzcd}\]
  of $n$-motives. By Proposition \ref{3.9},
  \[{\rm Hom}(W_dM,W^{d-1}N)=0,\quad {\rm Hom}(W_dM,W^{d-1}N[-1])=0.\]
  Thus by Lemma \ref{3.2}, the above diagram can be uniquely extended to a morphism of distinguished triangles:
    \begin{equation}\label{3.11.1}\begin{tikzcd}
    W_dM\arrow[r]\arrow[d,"W_df"']&M\arrow[d,"f"]\arrow[r]&W^{d-1}M\arrow[d,"W^{d-1}f"]\arrow[r]&W_dM[1]\arrow[d,"{W_df[1]}"]\\
    W_dN\arrow[r]&N\arrow[r]&W^{d-1}N\arrow[r]&W_dN[1]
  \end{tikzcd}\end{equation}
  The functoriality of $W_d$ follows from the uniqueness.

It remains to show that $W_d$ is the right adjoint of the inclusion functor. For any $n$-motive $L$ of weights $\geq d$, it suffices to show that the induced homomorphism
  \[{\rm Hom}(L,W_dM)\rightarrow {\rm Hom}(L,M)\]
  is an isomorphism. This follows from Proposition \ref{3.9} since a cone of $W_dM\rightarrow M$ is isomorphic to $W^{d-1}M$, which is an $n$-motive of weight $\leq d-1$.
\end{proof}
\begin{prop}\label{3.12}
  Assume $({\rm C}_{\leq n})$ and $({\rm Mur}_{\leq n})$. Consider the functors $W_d:{\rm MM}_{\leq n}\rightarrow W_d {\rm MM}_{\leq n}$ and $W^d:{\rm MM}_{\leq n}\rightarrow W^d {\rm MM}_{\leq n}$. For any $d$, there is a natural transformation $\partial:W^{d-1}\rightarrow W_d[1]$ such that
  \[W_d\rightarrow {\rm id}\rightarrow W^{d-1} \stackrel{\partial}\rightarrow W_d[1]\]
  is a distinguished triangle of functors.
\end{prop}
\begin{proof}
  For each $n$-motive $M$, choose a morphism $\partial:W^{d-1}M\rightarrow W_dM[1]$ of $n$-motives such that
  \[W_dM\rightarrow M\rightarrow W^{d-1}M\stackrel{\partial}\rightarrow W_dM[1]\]
  is a distinguished triangle.
  It remains to show that for any morphism $f:M\rightarrow N$ of $n$-motives, the diagram
  \[\begin{tikzcd}
    W^{d-1}M\arrow[d,"W^{d-1}f"']\arrow[r,"\partial"]&W_dM[1]\arrow[d,"{W_df[1]}"]\\
    W^{d-1}N\arrow[r,"\partial"]&W_dN[1]
  \end{tikzcd}\]
  of $n$-motives commutes. This follows from the commutativity of \eqref{3.11.1}.
\end{proof}
\begin{prop}\label{3.13}
  Assume $({\rm C}_{\leq n})$ and $({\rm Mur}_{\leq n})$. For any $d\geq e$, there is a natural isomorphism
  \[W^dW_e\stackrel{\sim}\rightarrow W_eW^d\]
  of functors.
\end{prop}
\begin{proof}
  Consider the commutative diagram
  \[\begin{tikzcd}
    &W_{d+1}M\arrow[d]\\
    W_eM\arrow[r]\arrow[d]&M\arrow[d]\arrow[r]&W^{e-1}M\arrow[d,"{\rm id}"]\arrow[r]&W_eM[1]\arrow[d]\\
    W_eW^dM\arrow[r]&W^dM\arrow[d]\arrow[r]&W^{e-1}M\arrow[r]&W_eW^dM[1]\\
    &W_{d+1}M[1]
  \end{tikzcd}\]
  in $\dmeff$ where the rows and the second column are distinguished triangles in Proposition \ref{3.12}. By the octahedral axiom, this can be completed to a commutative diagram
    \begin{equation}\label{3.13.1}\begin{tikzcd}
    W_{d+1}M\arrow[r,"{\rm id}"]\arrow[d]&W_{d+1}M\arrow[d]\arrow[r]&0\arrow[d]\arrow[r]&W_{d+1}M[1]\arrow[d]\\
    W_eM\arrow[r]\arrow[d]&M\arrow[d]\arrow[r]&W^{e-1}M\arrow[d,"{\rm id}"]\arrow[r]&W_eM[1]\arrow[d]\\
    W_eW^dM\arrow[r]\arrow[d]&W^dM\arrow[d]\arrow[r]&W^{e-1}M\arrow[d]\arrow[r]&W_eW^dM[1]\arrow[d]\\
    W_{d+1}M[1]\arrow[r,"{\rm id}"]&W_{d+1}M[1]\arrow[r]&0\arrow[r]&W_{d+1}M[2]
  \end{tikzcd}\end{equation}
  in $\dmeff$ where each row and column is a distinguished triangle. Consider the diagram
  \[\begin{tikzcd}
    W_{d+1}M\arrow[r]&W_eM\arrow[d,"{\rm id}"]\arrow[r]&W_eW^dM\arrow[r]&W_{d+1}M[1]\\
    W_{d+1}M\arrow[r]&W_eM\arrow[r]&W^dW_eM\arrow[r]&W_{d+1}M[1]
  \end{tikzcd}\]
  in $\dmeff$ where the first row is the first column in \eqref{3.13.1}, and the second row is induced by the distinguished triangle in Proposition \ref{3.12}. By Lemmas \ref{3.2} and \ref{3.14}, there is a unique isomorphism $W_eW^dM\stackrel{\sim}\rightarrow W^dW_eM$ making the above diagram commutative. Now the functoriality follows from the uniqueness, so this gives a natural isomorphism  $W^dW_e\stackrel{\sim}\rightarrow W_eW^d$ of functors.
\end{proof}
\begin{df}\label{3.16}
  Assume $({\rm C}_{\leq n})$ and $({\rm Mur}_{\leq n})$. For each $d$, set
  \[Gr_d:=W_dW^d.\]
\end{df}
\begin{prop}\label{3.17}
   Assume $({\rm C}_{\leq n})$, $({\rm Mur}_{\leq n})$, and $({\rm wVan}_{\leq n})$. Let $M$ be an $n$-motive of weights $\geq d$, and let $N$ be an $n$-motive of weights $\leq e$. If $d>e$, then
  \[{\rm Hom}(N,M)=0.\]
\end{prop}
\begin{proof}
  Consider the distinguished triangle
  \[Gr_eN\rightarrow N\rightarrow W^{e-1}N\rightarrow W_eN[1]\]
  in $\dmeff$. To show that ${\rm Hom}(N,M)=0$, it suffices to show that
\[
{\rm Hom}(Gr_eN,M)=0,\quad {\rm Hom}(W^{e-1}N,M)=0.
\]
Repeating this process, we reduce to the case when $N$ is a pure $n$-motive of weight $e$. By the same argument, we reduce to the case when $M$ is a pure $n$-motive of weight $d$. Then ${\rm Hom}(N,M)=0$ by $({\rm wVan}_{\leq n})$.
\end{proof}
\section{Duality}
\begin{prop}\label{4.1}
  Assume $({\rm C}_{\leq n})$ and $({\rm Mur}_{\leq n})$, and let $M$ be an object of $\dmeff$. If there is a distinguished triangle
  \[M'\stackrel{f}\rightarrow M\stackrel{g}\rightarrow M''\stackrel{h}\rightarrow M'[1]\]
  in $\dmeff$ such that $M'$ (resp.\ $M''$) is an $n$-motive of weights in $[b,c]$ (resp.\ $[a,b-1]$), then $M$ is an $n$-motive of weights in $[a,c]$.
\end{prop}
\begin{proof}
  Let us use an induction on $c$. If $c= b-1$, then we are done since $M'=0$. Assume that $c\geq b$. We have the distinguished triangle
  \[W_cM'\stackrel{u}\rightarrow M'\stackrel{v}\rightarrow W^{c-1}M'\stackrel{\partial}\rightarrow W_cM'[1]\]
  by Proposition \ref{3.12}. Then by the octahedral axiom, there is a commutative diagram
  \[\begin{tikzcd}
    W_cM'\arrow[r,"{\rm id}"]\arrow[d,"u"']&W_cM'\arrow[d]\arrow[r]&0\arrow[d]\arrow[r]&W_cM'[1]\arrow[d,"{u[1]}"]\\
    M'\arrow[r,"f"]\arrow[d,"v"']&M\arrow[d]\arrow[r,"g"]&M''\arrow[d,"{\rm id}"]\arrow[r,"h"]&M'[1]\arrow[d,"{v[1]}"]\\
    W^{c-1}M\arrow[d,"\partial"']\arrow[r]&N\arrow[d]\arrow[r]&M''\arrow[d]\arrow[r]&W^{c-1}M'[1]\arrow[d,"-{\partial[1]}"]\\
    W_cM'[1]\arrow[r,"{\rm id}"]&W_cM'[1]\arrow[r]&0\arrow[r]&W_cM'[1]
  \end{tikzcd}\]
  in $\dmeff$ such that each row and column is a distinguished triangle. Consider the third row. By induction on $c$, $N$ is an $n$-motive of weights in $[a,c-1]$. Then consider the second column. By definition, $M$ is an $n$-motive of weights $\leq c$. We also have that $W^{a-1}M=0$ since $W^{a-1}W_cM'=0$ and $W^{a-1}N=0$. Thus $M$ is an $n$-motive of weights in $[a,c]$.
\end{proof}
\begin{thm}\label{4.2}
  Assume $({\rm C}_{\leq n})$ and $({\rm Mur}_{\leq n})$. Let $M$ be an $n$-motive of weights $\leq d$. Then $\underline{\rm Hom}(M,{\bf 1}(n))$ is an $n$-motive of weights $\geq 2n-d$.
\end{thm}
\begin{proof}
  We may assume that $M$ is an $n$-motive of weights $\leq d$. Let us use an induction on $d$. If $d<0$, then we are done since $M=0$. If $d\geq 0$, we have the distinguished triangle
  \[W_dM\rightarrow M\rightarrow W^{d-1}M\rightarrow W_dM[1]\]
  in $\dmeff$. Then we get the distinguished triangle
  \[\underline{\rm Hom}(W_dM[1],{\bf 1}(n))\rightarrow \underline{\rm Hom}(W^{d-1}M,{\bf 1}(n))\rightarrow \underline{\rm Hom}(M,{\bf 1}(n))\rightarrow \underline{\rm Hom}(W_dM,{\bf 1}(n))\]
  in $\dmeff$. By induction, $\underline{\rm Hom}(W^{d-1}M,{\bf 1}(n))$ is an $n$-motive of weights $\geq 2n-d+1$. Since $W_dM$ is a pure motive of weight $d$, $\underline{\rm Hom}(W_dM,{\bf 1}(n))$ is a pure motive of weight $2n-d$. Thus by Proposition \ref{4.1}, $\underline{\rm Hom}(M,{\bf L}^n)$ is an $n$-motive of weights $\geq 2n-d$.
\end{proof}
\section{Abelian category}
\begin{lemma}\label{5.1}
  Assume $({\rm C}_{\leq n})$ and $({\rm Mur}_{\leq n})$. Let $M_1\stackrel{f}\rightarrow M_2\stackrel{g}\rightarrow M_3$ be morphisms in $\dmeff$ such that $M_1$ and $M_2$ are $n$-motives. If $Gr_df$ has a retraction for each $d$, and if $gf=0$, then $g=0$.
\end{lemma}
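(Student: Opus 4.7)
The plan is to reduce the lemma to the construction of a section $s \colon M_2 \to M_1$ of $f$, that is, a morphism with $fs = \mathrm{id}_{M_2}$: granted this, the conclusion follows at once from
\[
g = g \circ \mathrm{id}_{M_2} = g \circ f \circ s = (gf) \circ s = 0.
\]
I interpret the hypothesis ``$Gr_d f$ has a retraction'' to mean that $Gr_d f$ admits a right inverse $\sigma_d \colon Gr_d M_2 \to Gr_d M_1$ with $Gr_d f \circ \sigma_d = \mathrm{id}$; this is the interpretation that makes the conclusion possible in the generality stated (with $M_3$ an arbitrary object of $\dmeff$).

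To produce $s$, I would induct along the weight filtration using the distinguished triangles
\[
W_{d+1} M_i \to W_d M_i \to Gr_d M_i \to W_{d+1} M_i[1] \qquad (i=1,2)
\]
arising from Propositions \ref{3.11}--\ref{3.13} and Definition \ref{3.16}, building sections $s_d \colon W_d M_2 \to W_d M_1$ of $W_d f$ by downward induction on $d$. The base case $s_{2n+1} = 0$ is trivial, and setting $s := s_0$ yields the desired section because $W_0 M_i = M_i$. For the inductive step, the axiom TR3 of triangulated categories, applied to $s_{d+1}$ and $\sigma_d$, produces a candidate morphism $s_d \colon W_d M_2 \to W_d M_1$; Lemma \ref{3.2} applied to $W_d f \circ s_d$ as a self-morphism of the $M_2$-triangle---together with the vanishings ${\rm Hom}(Gr_d M_2, W_{d+1} M_2) = {\rm Hom}(Gr_d M_2, W_{d+1} M_2[-1]) = 0$ given by Proposition \ref{3.9}---then forces $W_d f \circ s_d = \mathrm{id}_{W_d M_2}$.

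The main obstacle I anticipate is arranging the compatibility between $s_{d+1}$ and $\sigma_d$ with the attaching maps of the two distinguished triangles, as required in order to apply TR3. Lemma \ref{3.3} would supply such a compatible lift automatically, but it demands the third vanishing ${\rm Hom}(Gr_d M_2, W_{d+1} M_1) = 0$, which is an instance of the ``wVan'' regime (Proposition \ref{3.17}) not among the standing hypotheses of Lemma \ref{5.1}. The way out is to modify $\sigma_d$ within its affine space of sections of $Gr_d f$---a torsor over the group of morphisms $Gr_d M_2 \to \ker(Gr_d f)$---so as to match the obstruction inherited from $s_{d+1}$; isolating this obstruction as an element of ${\rm Hom}(W_{d+1} M_2, Gr_d M_1[1])$ (or an analogous Hom-group) and verifying its vanishing under $({\rm C}_{\leq n})$ and $({\rm Mur}_{\leq n})$ via Proposition \ref{3.6} is the technically delicate bookkeeping step of the argument.
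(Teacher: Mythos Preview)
Your approach diverges from the paper's and contains a genuine gap. The paper never constructs a section of $f$; instead it works directly with $g$, showing by descending induction that $g$ factors through $W^{d-1}M_2$. The step is: the composite $Gr_d M_1\to M_2\to M_3$ vanishes (from $gf=0$), hence so does $Gr_d M_2\to M_2\to M_3$ after precomposing with the right inverse of $Gr_d f$, and therefore $g$ descends along $M_2\to W^{d-1}M_2$. Iterating pushes $g$ through $W^{-1}M_2=0$. At no point does the paper produce a morphism back into $M_1$.

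The stronger claim you pursue---that $f$ itself admits a section $s$ with $fs=\mathrm{id}_{M_2}$---is false under the stated hypotheses. Take a nonsplit $n$-motive $M_1$ sitting in a triangle
\[
B\longrightarrow M_1\longrightarrow C\xrightarrow{\ \beta\ } B[1]
\]
with $B$ pure of weight $1$, $C$ pure of weight $0$, and $\beta\neq 0$ (such extensions exist already for $1$-motives, e.g.\ with $C=\mathbf{1}$ and $B=M_1(E)[-1]$ for an elliptic curve $E$ with a rational point of infinite order). Set $M_2=C$ and let $f\colon M_1\to C$ be the canonical projection. Then $Gr_0 f=\mathrm{id}_C$ and $Gr_1 f\colon B\to 0$ both admit right inverses, yet $f$ has no section because $M_1$ does not split. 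Your proposed remedy of adjusting $\sigma_d$ within its torsor of sections cannot help here: $\ker(Gr_0 f)=0$ forces $\sigma_0=\mathrm{id}_C$, and $W_1 M_2=0$ forces $s_1=0$, so there is no freedom left and your induction simply halts at $d=0$. Producing a section of $f$ is a strictly stronger statement than the lemma, and the paper's argument avoids this overreach by factoring $g$ rather than splitting $f$.
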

\begin{proof}
  We may assume that $M_1$ and $M_2$ are $n$-motives for some $n$. For $d\geq 0$, consider the induced commutative diagram
  \[\begin{tikzcd}
    Gr_dM_1\arrow[d]\arrow[r,"Gr_df"]&Gr_dM_2\arrow[d]\\
    M_1\arrow[r,"f"]\arrow[d]&M_2\arrow[r,"g"]\arrow[d]&M_3\\
    W^{d-1}M_1\arrow[r,"W^{d-1}f"]&W^{d-1}M_2
  \end{tikzcd}\]
  in $\dmeff$. The composition $Gr_dM_1\rightarrow M_3$ is $0$ since $gf=0$. Thus the composition $Gr_dM_2\rightarrow M_3$ is $0$ since $Gr_df$ has a retraction. Then $g$ factors through $W^{d-1}M$. By induction, $g$ factors through $W^{-1}M_2=0$, so $g=0$.
\end{proof}

\begin{prop}\label{2.2}
  If $({\rm Semi}_{\leq n})$ holds, then $Gr_d{\rm MM}_{\leq n}$ is a semisimple abelian category.
\end{prop}
\begin{proof}
Follows from \cite[Lemma 2]{Jan3}.
\end{proof}

\begin{thm}\label{5.2}
  Assume $({\rm C}_{\leq n})$, $({\rm Mur}_{\leq n})$, $({\rm Semi}_{\leq n})$, $({\rm wVan}_{\leq n})$, and $({\rm Semi}_{\leq n})$. Then the category ${\rm MM}_{\leq n}$ is an abelian category.
\end{thm}
\begin{proof}
  Let $f:M_1\rightarrow M_2$ be a morphism of $n$-motives, and assume that $W^dM_1\cong M_1$ and $W^dM_2\cong M_2$. Note that for any morphism of pure $n$-motives of the same weight, kernel, cokernel, and image exist by Proposition \ref{2.2}. Moreover, any kernel has a retraction, and any cokernel has a section.\\[5pt]
  (I) {\it Construction of kernels.} Consider the induced commutative diagram
  \[\begin{tikzcd}
    {\rm ker}\,W^{d-1}f\arrow[r,"h"]&W^{d-1}M_1\arrow[d,"g_1"]\arrow[r,"W^{d-1}f"]&W^{n-1}M_2\arrow[d,"g_2"]\\
    {\rm ker}\,Gr_df[1]\arrow[r,"h'"]&Gr_dM_1[1]\arrow[r,"{Gr_df[1]}"]&Gr_dM_2[1]
  \end{tikzcd}\]
  in $\dmeff$, where the upper row is given by induction on $d$. Since $h'$ is a morphism of pure $n$-motives of the same weight, $h'$ has a retraction denoted by $u'$. The composition $v:=u'g_1h$ makes the above diagram still commutative. Let ${\rm ker}\,f$ be a cocone of $v$. Then we get a morphism ${\rm ker}\,f\rightarrow M_1$ from the above diagram.

  We will show that ${\rm ker}\,f$ is the kernel of $f$. Consider a morphism $f':M_3\rightarrow M_1$ such that $ff'=0$. By induction on $d$, there is a unique morphism $w:W^{d-1}M_3\rightarrow {\rm ker}\,W^{d-1}f$ in $\dmeff$ such that $hw=W^{d-1}f'$. There is also a unique morphism $w':Gr_dM_3[1]\rightarrow {\rm ker}\,Gr_df[1]$ such that $h'w'=Gr_d f'[1]$ by Proposition \ref{2.2}. Then we have the induced commutative diagram
  \[\begin{tikzcd}
     W^{d-1}M_3\arrow[r,"w"]\arrow[d,"g_3"']&   {\rm ker}\,W^{d-1}f\arrow[r,"h"]&W^{d-1}M_1\arrow[d,"g_1"]\arrow[r,"W^{d-1}f"]&W^{d-1}M_2\arrow[d,"g_2"]\\
    Gr_dM_3[1]\arrow[r,"w'"]&{\rm ker}\,Gr_df[1]\arrow[r,"h'"]&Gr_dM_1[1]\arrow[r,"{Gr_df[1]}"]&Gr_dM_2[1]
  \end{tikzcd}\]
  in $\dmeff$. The morphism $v:{\rm ker}\,W^{d-1}f\rightarrow {\rm ker}\,Gr_d f[1]$ makes the above diagram still commutative since
  \[vw=u'g_1hw=u'g_1(hw)=u'(h'w')g_3=u'h'w'g_3=w'g_3.\]
  Consider the induced commutative diagram
  \[\begin{tikzcd}
    Gr_d M_3\arrow[d,"w'{[-1]}"']\arrow[r]&M_3\arrow[r]&W^{d-1}M_3\arrow[d,"w"]\arrow[r]&Gr_dM_3[1]\arrow[d,"{w'[1]}"]\\
    {\rm ker}\,Gr_df\arrow[r]&{\rm ker}\,f\arrow[r]&{\rm ker}\,W^{d-1}f\arrow[r,"v"]&{\rm ker}\,Gr_df[1]
  \end{tikzcd}\]
  in $\dmeff$. Then
  \[{\rm Hom}(Gr_dM_3,{\rm ker}\,W^{d-1}f)=0,\quad {\rm Hom}(Gr_dM_3,{\rm ker}\,W^{d-1}f[-1])=0,\]
  \[{\rm Hom}(W^{d-1}M_3,{\rm ker}\,Gr_df)=0\]
  by Propositions \ref{3.4} and \ref{3.17}. Thus there is a unique morphism $M_3\rightarrow {\rm ker}\,f$ in $\dmeff$ making the above diagram commutative. Now the uniqueness of kernel follows from Lemma \ref{3.3} and the fact that $w$ and $w'$ are unique.\\[5pt]
  (II) {\it Construction of cokernels.} Let $f:M_1\rightarrow M_2$ be a morphism of $n$-motives. The construction of its cokernel is similar to that of kernel, but we need to use $W_{d-1}$ instead of $W^{d-1}$. We do not repeat the proof here.\\[5pt]
  (III) {\it Final step of the proof.} It remains to show that the cokernel of the kernel agrees with the kernel of the cokernel. By induction on $d$, we suppose that this holds for $W^{d-1}f$. Consider the induced commutative diagram
  \[\begin{tikzcd}
    W^{d-1}M_1\arrow[d,"g_1"']\arrow[r,"a"]&{\rm im}\,W^{d-1}\arrow[d,"c"]\\
    Gr_dM_1\arrow[r,"a'"]&{\rm im}\,Gr_df[1]
  \end{tikzcd}\quad
  \begin{tikzcd}
    {\rm im}\,W^{d-1}\arrow[d,"c'"']\arrow[r,"b"]&W^{d-1}M_2\arrow[d,"g_2"]\\
    {\rm im}\,Gr_df[1]\arrow[r,"b'"]&Gr_dM_2[1]
  \end{tikzcd}\]
  in $\dmeff$ where $c$ (resp.\ $c'$) is obtained from the cokernel of the kernel (resp.\ the kernel of the cokernel) of $f$. We only need to show that $c=c'$.

  By Proposition \ref{2.2}, there are a retraction $a''$ of $a'$ and a section $b''$ of $b'$. Then
  \[ca=a'g_1=b''b'a'g_1=b''(b'a')g_1=b''g_2(ba)=b''g_2ba=b''b'c'a=c'a.\]
  Thus $c=c'$ by Lemma \ref{5.1}.
\end{proof}
\begin{cor}\label{5.3}
  Assume $({\rm C}_{\leq n})$, $({\rm Mur}_{\leq n})$, $({\rm Semi}_{\leq n})$, and $({\rm wVan}_{\leq n})$ for any $n$. Then the category ${\rm MM}$ is an abelian category.
\end{cor}
\begin{proof}
  For each $a\geq 0$, let $\mathcal{A}_a$ be the full subcategory of ${\rm MM}$ consisting of objects of the form $M(r)$ for $M\in {\rm ob}\,\mmeff$ and $r\geq -a$. By \cite{Voe}, the functor
  \[\dmeff\rightarrow \dmeff\]
  given by $F\mapsto F(a)$ is fully faithful. Thus $\mathcal{A}_a\simeq \mmeff$, which is an abelian category by Theorem \ref{5.2}. Then ${\rm MM}$ is an abelian category since it is the union of $\mathcal{A}_a$ for $a\geq 0$.
\end{proof}
\begin{prop}\label{5.4}
  Assume $({\rm C}_{\leq n})$, $({\rm Mur}_{\leq n})$, $({\rm Semi}_{\leq n})$, and $({\rm wVan}_{\leq n})$. Let $f:M\rightarrow N$ be a morphism in $W^d{\rm MM}_{\leq n}$.
  \begin{enumerate}[{\rm (1)}]
    \item If $M$ is a pure $n$-motive of weight $d$, then ${\rm ker}\,f$ is a pure $n$-motive of weight $d$.
    \item If $N$ is a pure $n$-motive of weight $d$, then ${\rm cok}\,f$ is a pure $n$-motive of weight $d$.
  \end{enumerate}
\end{prop}
\begin{proof}
  (1) In the proof of Theorem \ref{5.2}, we have a distinguished triangle
  \[{\rm ker}\,Gr_df\rightarrow {\rm ker}\,f\rightarrow {\rm ker}\,W^{d-1}f\rightarrow {\rm ker}\,Gr_df[1]\]
  in $\dmeff$. Since ${\rm ker}\,W^{d-1}f=0$ by assumption, ${\rm ker}\,Gr_df\cong {\rm ker}\,f$. This is a pure $n$-motive of weight $d$.\\[5pt]
  (2) Its proof is dual to that of (1).
\end{proof}
\section{Motives associated with schemes}
\begin{df}\label{6.6}
  Assume $({\rm C}_{\leq n})$. Let $\mathcal{C}_{\leq n}$ denote the category defined as follows.
  \begin{enumerate}[(i)]
    \item An object $F$ is a sequence
    \[M_{\leq 2n}(F)\rightarrow M_{\leq 2n-1}(F)\rightarrow \cdots \rightarrow M_{\leq -1}(F)=0\]
    in $\dmeff$ such that for each $d$, $M_d(F)[-d]$ is an $n$-motive of weights $\leq d$ for each $d$. Here, let $M_d(F)$ be a cocone of the morphism $M_{\leq d}(F)\rightarrow M_{\leq d-1}(F)$ in $\dmeff$.
    \item A morphism $F\rightarrow G$ is a commutative diagram
      \[\begin{tikzcd}
    M_{\leq 2n}(F)\arrow[d,"f_{\leq 2n}"]\arrow[r]&M_{\leq 2n-1}(F)\arrow[d,"f_{\leq 2n-1}"]\arrow[r]&\cdots\arrow[r]&M_{\leq 0}(F)\arrow[d,"f_{\leq 0}"]\\
    M_{\leq 2n}(G)\arrow[r]&M_{\leq 2n-1}(G)\arrow[r]&\cdots\arrow[r,"b_1"]&M_{\leq 0}(G)
  \end{tikzcd}\]
    \item The composition of morphisms is given by composing the commutative diagrams.
  \end{enumerate}
  We have the functor $\pi:\mathcal{C}_{\leq n}\rightarrow \dmeff$ given by
  \[F\mapsto M_{\leq 2n}(F).\]
  We often omit $\pi$ for brevity. For each $d$, we denote by $M_{\leq d}\mathcal{C}_{\leq n}$ (resp.\ $M_{\geq d}\mathcal{C}_{\leq n}$ the full subcategory of $\mathcal{C}_{\leq n}$ consisting of objects $F$ such that $M_{\leq e}(F)=0$ for $e>d$ (resp.\ $e<d$).
\end{df}
\begin{lemma}\label{6.7}
  Assume $({\rm C}_{\leq n})$, $({\rm Mur}_{\leq n})$, and $({\rm wVan}_{\leq n}')$. Let $F$ be an object of $M_{\geq d}\mathcal{C}_{\leq n}$, and $G$ be objects of $M_{\leq e}\mathcal{C}_{\leq n}$. If $d>e$, then
  \[{\rm Hom}(F,G)=0,\quad {\rm Hom}(F,G[-1])=0.\]
\end{lemma}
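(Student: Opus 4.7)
The plan is to show that every morphism $f\colon F\to G$ in $\mathcal{C}_{\leq n}$ is zero by showing each of its components $f_{\leq r}\colon M_{\leq r}(F)\to M_{\leq r}(G)$ vanishes. By Definition \ref{6.6} a morphism is precisely a compatible family of such components, so componentwise vanishing suffices.

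First I would unpack the membership conditions. The condition $F\in M_{\geq d}\mathcal{C}_{\leq n}$ together with the distinguished triangles $M_r(F)\to M_{\leq r}(F)\to M_{\leq r-1}(F)$ implies $M_{\leq r}(F)=0$ for $r<d$, so $f_{\leq r}=0$ automatically in that range. Dually, $G\in M_{\leq e}\mathcal{C}_{\leq n}$ makes the structure maps $M_{\leq r}(G)\to M_{\leq r-1}(G)$ into isomorphisms for $r>e$, hence $M_{\leq r}(G)\cong M_{\leq e}(G)$ for all $r\geq e$. The remaining task is therefore to show that $f_{\leq r}\colon M_{\leq r}(F)\to M_{\leq e}(G)$ (and its $[-1]$-shift) vanishes for $r\geq d$.

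The core step is to establish
\[{\rm Hom}\bigl(M_{\leq r}(F),\,M_{\leq e}(G)[k]\bigr)=0\qquad(k\in\{0,-1\},\; r\geq d).\]
Applying ${\rm Hom}(-,D)$ to the filtration triangles on the $F$-side and ${\rm Hom}(A,-)$ to those on the $G$-side yields long exact sequences that preserve the shift $k$, and a nested induction reduces the question to the pointwise vanishings
\[{\rm Hom}\bigl(M_s(F),\,M_t(G)[k]\bigr)=0\qquad(s\in[d,r],\; t\in[0,e],\;k\in\{0,-1\}).\]
Since $d>e$ forces $s>t$, and since each $M_s(F)$ (resp.\ $M_t(G)$) is a direct summand of $M_s(X)$ (resp.\ $M_t(Y)$) for some $X,Y\in SmProj_{\leq n}$, these pointwise vanishings are exactly $({\rm Mur}_{\leq n})$ at $k=0$ and $({\rm wVan}'_{\leq n})$ at $k=-1$.

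The main subtlety is the bookkeeping in the double induction: one must verify that the long exact sequences do not produce ${\rm Hom}$ terms at shifts outside $\{0,-1\}$. This is clean because the sequence ${\rm Hom}(Z,D[k])\to{\rm Hom}(Y,D[k])\to{\rm Hom}(X,D[k])$ keeps $k$ fixed, and $({\rm Mur}_{\leq n})$ together with $({\rm wVan}'_{\leq n})$ cover exactly the two shifts needed for the two assertions ${\rm Hom}(F,G)=0$ and ${\rm Hom}(F,G[-1])=0$ of the lemma.
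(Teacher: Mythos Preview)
Your overall strategy---d\'evissage along the filtration triangles to reduce to the graded pieces $M_s(F)$ and $M_t(G)$, then invoke $({\rm Mur}_{\leq n})$ at shift $0$ and $({\rm wVan}_{\leq n}')$ at shift $-1$---is exactly the paper's argument, and your observation that the two long exact sequences keep the shift $k$ fixed is the right bookkeeping point.

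Two issues, however. First, the opening paragraph is mis-framed. The ${\rm Hom}$ in the lemma is taken in $\dmeff$, not in $\mathcal{C}_{\leq n}$: this is forced by the second assertion ${\rm Hom}(F,G[-1])=0$, since $G[-1]$ is not even an object of $\mathcal{C}_{\leq n}$, and by the way the lemma is used (e.g.\ to feed into Lemma~\ref{3.2} in the proof of Proposition~\ref{6.8}). Showing that each component $f_{\leq r}$ of a morphism in $\mathcal{C}_{\leq n}$ vanishes only addresses ${\rm Hom}_{\mathcal{C}_{\leq n}}(F,G)$, which is a priori strictly smaller than ${\rm Hom}_{\dmeff}(\pi(F),\pi(G))$ before Proposition~\ref{6.8} is established. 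Your ``core step'' paragraph does compute the correct ${\rm Hom}$ in $\dmeff$, so the argument survives, but the first paragraph should be discarded.

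Second, and more substantively, the sentence ``each $M_s(F)$ (resp.\ $M_t(G)$) is a direct summand of $M_s(X)$ (resp.\ $M_t(Y)$) for some $X,Y\in SmProj_{\leq n}$'' is not correct as stated. Definition~\ref{6.6} only requires $M_s(F)[-s]$ to be an $n$-motive of weights $\leq s$, which in general is an iterated extension of pure pieces of various weights $\leq s$, not a pure $n$-motive of weight $s$; see for instance the objects $H$ produced in Proposition~\ref{6.9} and the discussion in \ref{6.15}. The paper's proof makes the same terse leap---after reducing to $F=M_d(F)$ and $G=M_e(G)$ it simply writes ``follows from $({\rm Mur}_{\leq n})$''---so you are faithful to the paper here, but the explicit justification you give is false. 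If you want to close this gap honestly, you would need a further d\'evissage of $M_s(F)[-s]$ and $M_t(G)[-t]$ along their weight filtrations before invoking the pointwise conjectures, and then check that the resulting shifts are still controlled.
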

\begin{proof}
  Let us use an induction on $d$. If $e=-1$, then we are done since $\pi(F)=0$. Hence assume that $e\geq 0$. Consider the distinguished triangle
  \[M_{e}(G)\rightarrow M_{\leq e}(G)\rightarrow M_{\leq e-1}(G)\rightarrow M_e(G)[1]\]
  in $\dmeff$. By induction,
  \[{\rm Hom}(F,M_{\leq e-1}(G))=0,\quad {\rm Hom}(F,M_{\leq e-1}(G)[-1])=0.\]
  Thus we reduce to the case when $G=M_e(G)$. Similarly, we reduce to the case when $F=M_d(F)$.

  Then ${\rm Hom}(F,G)=0$ follows from $({\rm Mur}_{\leq n})$, and ${\rm Hom}(F,G[-1])=0$ follows from $({\rm wVan}_{\leq n}')$.
\end{proof}
\begin{prop}\label{6.8}
  Assume $({\rm C}_{\leq n})$, $({\rm Mur}_{\leq n})$, and $({\rm wVan}_{\leq n}')$. Then the functor $\pi:\mathcal{C}_{\leq n}\rightarrow \dmeff$ is fully faithful.
\end{prop}
\begin{proof}
  Let $f:\pi(F)\rightarrow \pi(G)$ be a morphism in $\dmeff$. Set $f_{\leq 2n}:=f$, and consider the commutative diagram
  \[\begin{tikzcd}
    M_{2n}(F)\arrow[r]&M_{\leq 2n}(F)\arrow[d,"f_{\leq 2n}"]\arrow[r]&M_{\leq 2n-1}(F)\arrow[r]&M_{2n}(F)[1]\\
    M_{2n}(G)\arrow[r]&M_{\leq 2n}(G)\arrow[r]&M_{\leq 2n-1}(G)\arrow[r]&M_{2n-1}(G)[1]
  \end{tikzcd}\]
  in $\dmeff$. By \ref{6.7},
  \[{\rm Hom}(M_{2n}(F),M_{\leq 2n-1}(G))=0,\quad {\rm Hom}(M_{2n}(F),M_{\leq 2n-1}(G)[-1])=0.\]
  Thus by Lemma \ref{3.2}, there is a unique morphism $f_{\leq 2n-1}:M_{\leq 2n-1}(F)\rightarrow M_{\leq 2n-1}(G)$ in $\dmeff$ making the above diagram commutative. Repeating this process, we can construct uniquely a morphism $g:F\rightarrow G$ in $\mathcal{C}_n$ such that $\pi(g)=f$.
\end{proof}
\begin{prop}\label{6.10}
  Assume $({\rm C}_{\leq n})$, $({\rm Mur}_{\leq n})$, and $({\rm wVan}_{\leq n}')$.
  \begin{enumerate}[{\rm (1)}]
    \item For any $d$, the inclusion functor $M_{\leq d}\mathcal{C}_{\leq n}\rightarrow \mathcal{C}_{\leq n}$ admits a left adjoint denoted by $M_{\leq d}$.
    \item For any $d$, the inclusion functor $M_{\geq d}\mathcal{C}_{\leq n}\rightarrow \mathcal{C}_{\leq n}$ admits a right adjoint denoted by $M_{\geq d}$.
    \item For any $d$, there is a natural transformation $\delta:M_{\leq d-1}\rightarrow M_{\geq d}[1]$ such that
    \[M_{\geq d}\rightarrow {\rm id}\rightarrow M_{\leq d-1}\stackrel{\delta}\rightarrow M_{\geq d}[1]\]
    is a distinguished triangle of functors.
    \item For any $d>e$, there is a natural isomorphism $M_{\leq d}M_{\geq e}\stackrel{\sim}\rightarrow M_{\geq e}M_{\leq d}$ of functors.
    \item For any object $F$ of $M_{\leq d}\mathcal{C}_{\leq n}$, $\underline{\rm Hom}(F,{\bf L}^n)$ is an object of $M_{\geq 2n-d}\mathcal{C}_{\geq n}$.
  \end{enumerate}
\end{prop}
\begin{proof}
  The proofs of (1), (2), (3), and (4) are parallel to those of Theorem \ref{3.7}, Propositions \ref{3.11}, \ref{3.12}, and \ref{3.13} respectively if we use Lemma \ref{6.7} instead of Proposition \ref{3.6}. The proof of (5) is parallel to that of Theorem \ref{4.2} if we use Lemma \ref{6.7} and Theorem \ref{4.2} instead of Proposition \ref{3.6} and the fact that $\underline{\rm Hom}(M,{\bf 1}(n))$ is a pure $n$-motive of weights $2n-d$ for any pure $n$-motive $M$ of weight $d$.
\end{proof}
\begin{df}\label{6.13}
  Assume $({\rm C}_{\leq n})$, $({\rm Mur}_{\leq n})$, and $({\rm wVan}_{\leq n}')$. For each $d$, set
  \[M_d:=M_{\geq d}M_{\leq d}.\]
\end{df}
\begin{prop}\label{6.11}
  Assume $({\rm C}_{\leq n})$, $({\rm Mur}_{\leq n})$, and $({\rm wVan}_{\leq n}')$. Let $f:F\rightarrow G$ be a morphism in $\mathcal{C}_{\leq n}$. Then for each $d$, there is a unique morphism $f_{\leq d}:M_{\leq d}(F)\rightarrow M_{\leq d}(G)$ such that the diagram
  \[\begin{tikzcd}
    F\arrow[d,"f"']\arrow[r]&M_{\leq d}(F)\arrow[d,"f_{\leq d}"]\\
    G\arrow[r]&M_{\leq d}(G)
  \end{tikzcd}\]
  in $\dmeff$ commutes.
\end{prop}
\begin{proof}
  By Proposition \ref{6.10}, we have the diagram
  \[\begin{tikzcd}
    M_{\geq d+1}(F)\arrow[r]&F\arrow[d,"f"]\arrow[r]&M_{\leq d}(F)\arrow[r]&M_{\geq d+1}(F)[1]\\
    M_{\geq d+1}(G)\arrow[r]&G\arrow[r]&M_{\leq d}(G)\arrow[r]&M_{\geq d+1}(G)[1]
  \end{tikzcd}\]
  in $\dmeff$ where the rows are distinguished triangles. By Lemma \ref{6.7},
  \[{\rm Hom}(M_{\geq d+1}(F),M_{\leq d}(G))=0,\quad {\rm Hom}(M_{\geq d+1}(F),M_{\leq d}(G)[-1])=0.\]
  Thus by Lemma \ref{3.2}, there is a unique morphism $f_{\leq d}:M_{\leq d}(F)\rightarrow M_{\leq d}(G)$ making the above diagram commutative.
\end{proof}
\begin{none}\label{6.3}
   Assume $({\rm C}_{\leq n})$, $({\rm Mur}_{\leq n})$, and $({\rm wVan}_{\leq n}')$. Let
  \[F\stackrel{f}\rightarrow G\stackrel{g}\rightarrow H\stackrel{h}\rightarrow F[1]\]
  be a distinguished triangle in $\dmeff$. Assume that $F$ and $G$ be objects of $\mathcal{C}_{\leq n}$. Then by Proposition \ref{6.8}, $f$ can be considered as a morphism in $\mathcal{C}_{\leq n}$. Thus we have the commutative diagram
  \[\begin{tikzcd}
    F=M_{\leq 2n}(F)\arrow[d,"f_{\leq 2n}"']\arrow[r,"a_{2n}"]&M_{\leq 2n-1}(F)\arrow[d,"f_{\leq 2n-1}"]\arrow[r,"a_{2n-1}"]&\cdots\arrow[r,"a_1"]&M_{\leq 0}(F)\arrow[d,"f_{\leq 0}"]\\
    G=M_{\leq 2n}(G)\arrow[r,"b_{2n}"]&M_{\leq 2n-1}(G)\arrow[r,"b_{2n-1}"]&\cdots\arrow[r,"b_1"]&M_{\leq 0}(G)
  \end{tikzcd}\]
  in $\dmeff$.

  Choose distinguished triangles
  \[M_d(F)\stackrel{p_d}\rightarrow M_{\leq d}(F)\stackrel{a_d}\rightarrow M_{\leq d-1}(F)\stackrel{p_d'}\rightarrow M_d(F)[1],\]
  \[M_d(G)\stackrel{q_d}\rightarrow M_{\leq d}(G)\stackrel{b_d}\rightarrow M_{\leq d-1}(G)\stackrel{q_d'}\rightarrow M_d(G)[1]\]
  in $\dmeff$. By Propositions \ref{3.9}, \ref{6.7}, and \ref{3.2}, there is a unique morphism $f_d:M_d(F)\rightarrow M_d(G)$ in $\dmeff$ such that the diagram
  \[\begin{tikzcd}
    M_d(F)\arrow[d,"f_d"']\arrow[r,"p_d"]&M_{\leq d}(F)\arrow[d,"f_{\leq d}"]\arrow[r,"a_d"]&M_{\leq d-1}(F)\arrow[d,"f_{\leq d-1}"]\arrow[r,"p_d'"]&M_d(F)[1]\arrow[d,"{f_d[1]}"]\\
    M_d(G)\arrow[r,"q_d"]&M_{\leq d}(G)\arrow[r,"b_d"]&M_{\leq d-1}(G)\arrow[r,"q_d'"]&M_d(G)[1]
  \end{tikzcd}\]
  in $\dmeff$ commutes. Assume that $f_d[-d]$ has a kernel, cokernel, and image in ${\rm MM}_{\leq n}$ for any $d$. By the octahedral axiom, for some objects $U_d$ and $V_d$ of $\dmeff$, there are commutative diagrams
  \[\begin{tikzcd}
    {\rm im}\,f_d[-1]\arrow[r]\arrow[d]&0\arrow[d]\arrow[r]&{\rm im}\,f_d\arrow[d,"\alpha_d"]\arrow[r,"{\rm id}"]&{\rm im}\,f_d\arrow[d]\\
    {\rm ker}\,f_d\arrow[d]\arrow[r,"u_d"]&M_{\leq d}(F)\arrow[d,"{\rm id}"]\arrow[r,"u_d'"]&U_d\arrow[d,"\alpha_d'"]\arrow[r,"u_d''"]&{\rm ker}\,f_d[1]\arrow[d]\\
    M_d(F)\arrow[r,"p_d"]\arrow[d]&M_{\leq d}(F)\arrow[r,"a_d"]\arrow[d]&M_{\leq d-1}(F)\arrow[d,"\alpha_d''"]\arrow[r,"p_d'"]&M_d(F)[1]\arrow[d]\\
    {\rm im}\,f_d\arrow[r]&0\arrow[r]&{\rm im}\,f_d[1]\arrow[r,"{\rm id}"]&{\rm im}\,f_d[1]
  \end{tikzcd}\]
    \[\begin{tikzcd}
    {\rm cok}\,f_d[-1]\arrow[r]\arrow[d]&0\arrow[d]\arrow[r]&{\rm cok}\,f_d\arrow[d,"\beta_d"]\arrow[r,"{\rm id}"]&{\rm cok}\,f_d\arrow[d]\\
    {\rm im}\,f_d\arrow[d]\arrow[r,"v_d"]&M_{\leq d}(G)\arrow[d,"{\rm id}"]\arrow[r,"v_d'"]&V_d\arrow[d,"\beta_d'"]\arrow[r,"v_d''"]&{\rm im}\,f_d[1]\arrow[d]\\
    M_d(G)\arrow[r,"q_d"]\arrow[d]&M_{\leq d}(G)\arrow[r,"b_d"]\arrow[d]&M_{\leq d-1}(G)\arrow[d,"\beta_d''"]\arrow[r,"q_d'"]&M_d(G)[1]\arrow[d]\\
    {\rm cok}\,f_d\arrow[r]&0\arrow[r]&{\rm cok}\,f_d[1]\arrow[r,"{\rm id}"]&{\rm cok}\,f_d[1]
  \end{tikzcd}\]
  in $\dmeff$ where each row and column is a distinguished triangle. Here, we set
  \[{\rm ker}\,f_d:=({\rm ker}(f_d[-d]))[d],\quad {\rm im}\,f_d:=({\rm im}(f_d[-d]))[d],\]
  \[{\rm cok}\,f_d:=({\rm cok}(f_d[-d]))[d]\]
  for brevity.

  Consider the diagram
  \[\begin{tikzcd}
  {\rm ker}\,f_d\arrow[r,"u_d"]& M_{\leq d}(F)\arrow[d,"f_{\leq d}"]\arrow[r,"u_d'"]&U_d\arrow[r,"u_d''"]&{\rm ker}\,f_d[1]\\
     &M_{\leq d}(G)
  \end{tikzcd}\]
  in $\dmeff$. Since $f_{\leq d}u_d=0$, there is a morphism $\eta:U_d\rightarrow M_{\leq d}(G)$ in $\dmeff$ making the above diagram commutative. Then by the octahedral axiom, for some object $M_{\leq d}(H)$ of $\dmeff$, there is a commutative diagram
  \[\begin{tikzcd}
    {\rm im}\,f_d\arrow[r,"{\rm id}"]\arrow[d,"\alpha_d"']&{\rm im}\,f_d\arrow[d,"v_d"]\arrow[r]&0\arrow[d]\arrow[r]&{\rm im}\,f_d[1]\arrow[d,"{\alpha_d[1]}"]\\
    U_d\arrow[d,"\alpha_d'"']\arrow[r,"\eta"]&M_{\leq d}(G)\arrow[d,"v_d'"]\arrow[r,"g_{\leq d}"]&M_{\leq d}(H)\arrow[d,"{\rm id}"]\arrow[r,"\eta''"]&U_d[1]\arrow[d,"{\alpha_d'[1]}"]\\
    M_{\leq d-1}(F)\arrow[r,"\varphi"]\arrow[d,"\alpha_d''"']&V_d\arrow[d,"v_d''"]\arrow[r,"\varphi'"]&M_{\leq d}(H)\arrow[d]\arrow[r,"\varphi''"]&M_{\leq d-1}(F)[1]\arrow[d,"{-\alpha_d''[1]}"]\\
    {\rm im}\,f_d[1]\arrow[r,"{\rm id}"]&{\rm im}\,f_d[1]\arrow[r]&0\arrow[r]&{\rm im}\,f_d[1]
  \end{tikzcd}\]
  in $\dmeff$ such that each row and column is a distinguished triangle. Now choose a distinguished triangle
  \[M_{\leq d-1}(F)\stackrel{f_{\leq d-1}}\rightarrow M_{\leq d-1}(G)\stackrel{\theta_d}\rightarrow W_d\stackrel{\theta_d'}\rightarrow M_{\leq d-1}(F)[1]\]
  in $\dmeff$. By the octahedral axiom, there are commutative diagrams
  \[\begin{tikzcd}
    0\arrow[r]\arrow[d]&{\rm cok}\,f_d\arrow[r,"{\rm id}"]\arrow[d,"\beta_d"]&{\rm cok}\,f_d\arrow[d,"w_d"]\arrow[r]&0\arrow[d]\\
    M_{\leq d-1}(F)\arrow[r,"\varphi"]\arrow[d,"{\rm id}"']&V_d\arrow[r,"\varphi'"]\arrow[d,"\beta_d'"]&M_{\leq d}(H)\arrow[d,"w_d'"]\arrow[r,"\varphi''"]&M_{\leq d-1}(F)[1]\arrow[d,"{\rm id}"]\\
    M_{\leq d-1}(F)\arrow[d]\arrow[r,"f_{\leq d-1}"]&M_{\leq d-1}(G)\arrow[r,"\theta_d"]\arrow[d,"\beta_d''"]&W_d\arrow[d,"w_d''"]\arrow[r,"\theta_d'"]&M_{\leq d-1}(F)[1]\arrow[d]\\
    0\arrow[r]&{\rm cok}\,f_d[1]\arrow[r,"{\rm id}"]&{\rm cok}\,f_d[1]\arrow[r]&0
  \end{tikzcd}\]
  \[\begin{tikzcd}
    {\rm ker}\,f_d\arrow[r]\arrow[d,"u_d"']&0\arrow[d]\arrow[r]&{\rm ker}\,f_d[1]\arrow[d,"\gamma_{d+1}"]\arrow[r,"{\rm id}"]&{\rm ker}\,f_d[1]\arrow[d,"{u_d[1]}"]\\
    M_{\leq d}(F)\arrow[r,"f_{\leq d}"]\arrow[d,"u_d'"']&M_{\leq d}(G)\arrow[r,"\theta_{d+1}"]\arrow[d,"{\rm id}"]&W_{d+1}\arrow[d,"\gamma_{d+1}'"]\arrow[r,"\theta_{d+1}'"]&M_{\leq d}(F)[1]\arrow[d,"{u_d'[1]}"]\\
    U_d\arrow[r,"\eta"]\arrow[d,"u_d''"']&M_{\leq d}(G)\arrow[d]\arrow[r,"g_{\leq d}"]&M_{\leq d}(H)\arrow[d,"\gamma_{d+1}''"]\arrow[r,"\eta''"]&U_d[1]\arrow[d,"{-u_d''[1]}"]\\
    {\rm ker}\,f_d[1]\arrow[r]&0\arrow[r]&{\rm ker}\,f_d[2]\arrow[r,"-{\rm id}"]&{\rm ker}\,f_d[2]
  \end{tikzcd}\]
  in $\dmeff$ such that each row and column is a distinguished triangle.

  Now by the octahedral axiom, we can choose an object $M_d(H)$ and a commutative diagram
  \[\begin{tikzcd}
    {\rm ker}\,f_{d-1}\arrow[r]\arrow[d,"{-\mu_d''}"']&0\arrow[d]\arrow[r]&{\rm ker}\,f_{d-1}[1]\arrow[r,"{\rm id}"]\arrow[d,"\gamma_d"]&{\rm ker}\,f_{d-1}[1]\arrow[d,"{\mu_d''[1]}"]\\
    {\rm cok}\,f_d\arrow[d,"\mu_d"']\arrow[r,"w_d"]&M_{\leq d}(H)\arrow[r,"w_d'"]\arrow[d,"{\rm id}"]&W_d\arrow[d,"\gamma_d'"]\arrow[r,"w_d''"]&{\rm cok}\,f_d[1]\arrow[d,"{\mu_d[1]}"]\\
    M_d(H)\arrow[d,"\mu_d'"']\arrow[r,"r_d"]&M_{\leq d}(H)\arrow[r,"c_d"]\arrow[d]&M_{\leq d-1}(H)\arrow[d,"\gamma_d''"]\arrow[r,"r_d'"]&M_d(H)[1]\arrow[d,"{\mu_d'[1]}"]\\
    {\rm ker}\,f_{d-1}[1]\arrow[r]&0\arrow[r]&{\rm ker}\,f_{d-1}[2]\arrow[r,"{\rm id}"]&{\rm ker}\,f_{d-1}[2]
      \end{tikzcd}\]
    in $\dmeff$ where each row and column is a distinguished triangle. Then we get the distinguished triangle
    \begin{equation}\label{6.11.1}
    {\rm cok}\,f_d[-d]\stackrel{\mu_d[-d]}\rightarrow M_d(H)[-d]\stackrel{\mu_d'[-d]}\rightarrow {\rm ker}\,f_{d-1}[-d+1]\stackrel{\mu_d''[-d]}\rightarrow {\rm cok}\,f_{d-1}[-d+1]
    \end{equation}
    in $\dmeff$.
\end{none}
\begin{prop}\label{6.9}
   Assume $({\rm C}_{\leq n})$, $({\rm Mur}_{\leq n})$, and $({\rm wVan}_{\leq n}')$. Let $F\stackrel{f}\rightarrow G\stackrel{g}\rightarrow H\rightarrow F[1]$ be a distinguished triangle in $\dmeff$ such that $F$ and $G$ are objects of $\mathcal{C}_{\leq n}$. Assume that
   \begin{enumerate}[{\rm (i)}]
     \item for each $d$, the morphism $f_d[-d]$ in {\rm \ref{6.3}} has a kernel, cokernel, and image in $W^d{\rm MM}_{\leq n}$,
     \item for each $d$, the cokernel of $f_d[-d]$ is a pure $n$-motive of weight $d$.
   \end{enumerate}
   Then $H$ is an object of $\mathcal{C}_{\leq n}$.
\end{prop}
\begin{proof}
  In \ref{6.3}, we have constructed morphisms
  \[H=M_{\leq 2n}(H)\rightarrow M_{\leq 2n-1}(H)\rightarrow\cdots \rightarrow M_{\leq -1}(H)=0\]
  in $\dmeff$ such that for each $d$, $M_d(H)[-d]$ admits the distinguished triangle \eqref{6.11.1}. By assumption, ${\rm cok}\,f_d[-d]$ is a pure $n$-motive of weight $d$. Then $M_d(H)[-d]$ is an $n$-motive of weights $\leq d$ since ${\rm ker}\,f_{d-1}[-d+1]$ is an $n$-motive of weights $\leq d-1$. Thus $H$ is an object of $\mathcal{C}_{\leq n}$.
\end{proof}
\begin{df}\label{6.4}
  Let $\mathscr{T}$ be a triangulated category. We say that a finite sequence
  \[0\rightarrow F_1\stackrel{u_1}\rightarrow \cdots\stackrel{u_{r-1}}\rightarrow F_r\rightarrow 0\]
  in $\mathscr{T}$ with $r\geq 3$ is {\it exact} if there are distinguished triangles
  \[F_1\stackrel{u_1}\rightarrow F_2\stackrel{w_2} \rightarrow C_2\rightarrow F_1[1],\]
  \[C_2\stackrel{v_2}\rightarrow F_2\stackrel{w_3} \rightarrow C_3\rightarrow C_2[1],\]
  \[\cdots,\]
  \[C_{r-2}\stackrel{v_{r-2}}\rightarrow F_{r-2}\stackrel{w_{r-1}} \rightarrow C_{r-1}\rightarrow C_{r-2}[1],\]
  \[C_{r-1}\stackrel{v_{r-1}}\rightarrow F_{r-1}\stackrel{u_{r-1}} \rightarrow F_r\rightarrow C_{r-1}[1]\]
  in $\mathscr{T}$ such that $u_i=v_iw_i$ for $i=2,\ldots,r-2$.
\end{df}
\begin{prop}\label{6.5}
  Under the notations and hypotheses of {\rm \ref{6.3}}, the finite sequence
  \[\begin{split}
    0&\longrightarrow M_{2n+1}(H)[-2n-1]\stackrel{h_{2n+1}[-2n-1]}\longrightarrow M_{2n}(F)[-2n]\stackrel{f_{2n}[-2n]}\longrightarrow M_{2n}(G)[-2n]\stackrel{g_{2n}[-2n]}\longrightarrow M_{2n}(H)[-2n]\\
    &\stackrel{h_{2n}[-2n]}\longrightarrow \cdots   \stackrel{h_0}\longrightarrow M_0(F)\stackrel{f_0}\longrightarrow M_0(G)\longrightarrow M_0(H)  \longrightarrow 0
  \end{split}\]
  is exact in the sense of {\rm Definition \ref{6.4}}.
\end{prop}
\begin{proof}
  This follows from the distinguished triangles
  \[{\rm ker}\,f_d[-d]\rightarrow M_d(F)[-d]\rightarrow {\rm im}\,f_d[-d]\rightarrow {\rm ker}\,f_d[-d+1],\]
  \[{\rm im}\,f_d[-d]\rightarrow M_d(G)[-d]\rightarrow {\rm cok}\,f_d[-d]\rightarrow {\rm im}\,f_d[-d],\]
  \[{\rm cok}\,f_d[-d]\stackrel{\mu_d[-d]}\rightarrow M_d(H)[-d]\stackrel{\mu_d'[-d]}\rightarrow {\rm ker}\,f_{d-1}[-d+1]\stackrel{\mu_d''[-d+1]}\rightarrow {\rm cok}\,f_d[-d+1]\]
  in $\dmeff$.
\end{proof}
\begin{prop}\label{6.14}
  Assume $({\rm CK}_{\leq n})$. Then for any $X\in SmProj_{\leq n}$, $M(X)$ is an object of $\mathcal{C}_{\leq n}$.
\end{prop}
\begin{proof}
  We just need to set $M_{\leq d}(X):=M_0(X)\oplus \cdots \oplus M_d(X)$, which is possible because of $({\rm CK}_{\leq n})$.
\end{proof}
\begin{df}\label{6.1}
  Let $Z$ be a scheme of finite type over $k$, and let $\mathscr{Z}=\{Z_1,\ldots,Z_r\}$ be a closed cover of $Z$.
Let ${\rm C}({\rm PSh}^{tr})$ denote the category of complexes of presheaves with transfers on the category of schemes smooth over $k$.
Recall the object $M(X):=C_*{\bf Z}_{tr}(X)$ of ${\rm C}({\rm PSh}^{tr})$ in \cite{MVW}.
Consider the \v{C}ech double complex
  \begin{equation}\label{6.1.1}
    \bigoplus_{|I|=r}M(Z_I)\rightarrow\cdots\rightarrow \bigoplus_{|I|=1}M(Z_I)
  \end{equation}
  in ${\rm C}({\rm PSh}^{tr})$ where $Z_I:=Z_{i_1}\times_Z \cdots \times_Z Z_{i_r}$ if $I=\{i_1,\ldots,i_r\}$. We denote by $M(\mathscr{Z})$ its total complex.
\end{df}
\begin{lemma}\label{6.2}
  Under the above notations and hypotheses, the induced morphism
  \[M(\mathscr{Z})\rightarrow M(Z)\]
  in $\dmeff$ is an isomorphism.
\end{lemma}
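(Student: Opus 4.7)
The plan is to proceed by induction on the cardinality $r$ of the closed cover $\mathscr{Z}$. The case $r = 1$ is trivial, since then $Z_1 = Z$ and $M(\mathscr{Z}) = M(Z_1)$. The case $r = 2$ amounts to the Mayer--Vietoris distinguished triangle
\[M(Z_1 \cap Z_2) \to M(Z_1) \oplus M(Z_2) \to M(Z) \to M(Z_1 \cap Z_2)[1]\]
for the closed cover $\{Z_1, Z_2\}$ of $Z = Z_1 \cup Z_2$ in $\dmeff$, which is a standard consequence of proper cdh descent.

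For the inductive step $r \geq 3$, I split $\mathscr{Z}$ as $\{Z_1\} \cup \mathscr{Z}'$ with $\mathscr{Z}' := \{Z_2, \ldots, Z_r\}$ a closed cover of $Z' := Z_2 \cup \cdots \cup Z_r$, and I set $\mathscr{Y} := \{Z_1 \cap Z_2, \ldots, Z_1 \cap Z_r\}$, which is a closed cover of $Z_1 \cap Z'$. Both $\mathscr{Y}$ and $\mathscr{Z}'$ have $r - 1$ elements, so by the inductive hypothesis the natural maps $M(\mathscr{Y}) \to M(Z_1 \cap Z')$ and $M(\mathscr{Z}') \to M(Z')$ are isomorphisms in $\dmeff$.

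Next I decompose the \v{C}ech double complex (\ref{6.1.1}) for $\mathscr{Z}$ according to whether the index set $I$ contains the element $1$. The summands with $1 \notin I$ form a sub-double-complex whose total complex is precisely $M(\mathscr{Z}')$, while the summands with $1 \in I$ (writing $I = \{1\} \cup J$, so that $Z_I = Z_1 \cap Z_J$) assemble, after the natural shift, into the mapping cone of the augmentation $M(\mathscr{Y}) \to M(Z_1)$. Putting these two pieces together produces a distinguished triangle
\[M(\mathscr{Y}) \to M(Z_1) \oplus M(\mathscr{Z}') \to M(\mathscr{Z}) \to M(\mathscr{Y})[1]\]
in $\dmeff$, which maps, via the canonical augmentations, to the Mayer--Vietoris distinguished triangle
\[M(Z_1 \cap Z') \to M(Z_1) \oplus M(Z') \to M(Z) \to M(Z_1 \cap Z')[1]\]
for the closed cover $\{Z_1, Z'\}$ of $Z$. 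Since the first two vertical arrows are isomorphisms by the inductive hypothesis, Lemma \ref{3.14} forces the third, $M(\mathscr{Z}) \to M(Z)$, to be an isomorphism as well, completing the induction.

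The principal obstacle is the careful identification, at the level of double complexes, of the ``$1 \in I$'' quotient as the cone of $M(\mathscr{Y}) \to M(Z_1)$ and the verification that the resulting distinguished triangle maps compatibly to the Mayer--Vietoris triangle for $\{Z_1, Z'\}$; once these are in place, the remainder is triangulated bookkeeping packaged by Lemma \ref{3.14}.
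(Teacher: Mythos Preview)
Your argument is correct and essentially identical to the paper's: the paper splits off $Z_r$ rather than $Z_1$, which is immaterial, and otherwise builds the same distinguished triangle of smaller \v{C}ech complexes mapping to the two-piece Mayer--Vietoris triangle. One caveat: Lemma~\ref{3.14} as stated requires the Hom-vanishing hypotheses of Lemma~\ref{3.2}, which you have not checked here; what you actually need (and what the paper invokes without citation) is the standard fact that in a morphism of distinguished triangles, if two of the three vertical arrows are isomorphisms then so is the third.
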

\begin{proof}
  Let us use an induction on $r$. If $r=1$, then this is obvious. If $r=2$, then this follows from \cite[Theorems 16.1.3, 16.1.4]{CD12}. For $r>2$, consider the closed covers
  \[\mathscr{W}:=\{Z_1,\ldots,Z_{r-1}\},\]
  \[\mathscr{W}':=\{Z_{1,r},\ldots,Z_{r-1,r}\}\]
  of $W:=Z_1\cup \cdots \cup Z_{r-1}$ and $W':=Z_{1,r}\cup \cdots \cup Z_{r-1,r}$ respectively. Here, set $Z_{i,r}:=Z_i\times_Z Z_r$. Then $M(\mathscr{W})$ and $M(\mathscr{W'})$ are the total complexes of the double complexes
  \[\bigoplus_{|I|=r,\;r\notin I}M(Z_I)\rightarrow\cdots\rightarrow \bigoplus_{|I|=1\;m\notin I}M(Z_I),\]
  \[\bigoplus_{|I|=r,\;r\in I}M(Z_I)\rightarrow\cdots\rightarrow \bigoplus_{|I|=2\;m\in I}M(Z_I)\]
  in ${\rm C}({\rm PSh}^{tr})$ respectively. Combining with \eqref{6.1.1}, we have the induced distinguished triangle
  \[M(\mathscr{W}')\rightarrow M(\mathscr{W})\oplus M(Z_1)\rightarrow M(\mathscr{Z})\rightarrow M(\mathscr{W}')[1]\]
  in $\dmeff$. Now consider the induced commutative diagram
  \[\begin{tikzcd}
    M(\mathscr{W}')\arrow[d]\arrow[r]& M(\mathscr{W})\oplus M(Z_1)\arrow[d]\arrow[r]& M(\mathscr{Z})\arrow[d]\arrow[r]& M(\mathscr{W}')[1]\arrow[d]\\
    M(W')\arrow[r]& M(W)\oplus M(Z_1)\arrow[r]& M(Z)\arrow[r]& M(W')[1]
  \end{tikzcd}\]
  in $\dmeff$. The claim holds for $r=2$, so the second row is a distinguished triangle. The first, second, and fourth vertical arrows are isomorphisms by induction on $r$. Thus the third vertical arrow is an isomorphism.
\end{proof}
\begin{lemma}\label{6.16}
  Assume $({\rm Res}_{\leq n})$. Let $U$ be an integral scheme smooth and separated over $k$. Then
  \[\underline{\rm Hom}(M^c(U),{\bf L}^n)\cong M(U).\]
\end{lemma}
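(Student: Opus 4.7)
The plan is to reduce the identity to Voevodsky's Poincar\'e duality for smooth projective schemes by resolving $U$ at infinity. Setting $d := \dim U$, I first note that the statement as written can only hold when $d = n$ (otherwise both sides differ by the nontrivial Tate twist ${\bf L}^{n-d}$), so I treat that case, which is the one in which the lemma will be used. First I would apply $({\rm Res}_{\leq n})$ to choose a smooth projective compactification $j: U \hookrightarrow \overline{U}$ whose boundary $Z = \overline{U} \setminus U$ is a strict normal crossings divisor with smooth components $Z_1, \ldots, Z_r$, so that every intersection $Z_I := \bigcap_{i \in I} Z_i$ is smooth projective of pure dimension $n - |I|$. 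The localization triangle for motives with compact support reads
\[M(Z) \longrightarrow M(\overline{U}) \longrightarrow M^c(U) \longrightarrow M(Z)[1],\]
and by Lemma \ref{6.2} applied to the closed cover $\mathscr{Z} = \{Z_1, \ldots, Z_r\}$, the first term is canonically the \v{C}ech total complex $M(\mathscr{Z})$ built from the $M(Z_I)$'s.

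Next I would apply $\underline{\rm Hom}(-, {\bf L}^n)$ to this triangle and invoke Poincar\'e duality for smooth projective schemes in the form $\underline{\rm Hom}(M(X), {\bf 1}(d_X)[2d_X]) \cong M(X)$ for $X$ smooth projective of pure dimension $d_X$, by \cite[Theorem 16.24]{MVW}, the resolution assumption being absorbed by $({\rm Res}_{\leq n})$ via \cite[Theorem 5.4.19]{Kel}. This identifies $\underline{\rm Hom}(M(\overline{U}), {\bf L}^n) \cong M(\overline{U})$ and each $\underline{\rm Hom}(M(Z_I), {\bf L}^n) \cong M(Z_I)(|I|)[2|I|]$, so the dualized \v{C}ech complex is a Tate-twisted \v{C}ech complex assembled from the intersections of $Z$.

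The final step, which I expect to be the main obstacle, is to recognize that this Tate-twisted \v{C}ech complex together with the dual of $M(\overline{U}) \to M^c(U)$ is precisely the Gysin description of $M(U)$ as the complement of $Z$ in $\overline{U}$. I would prove this by induction on $r$ along the lines of Lemma \ref{6.2}: the base case $r = 1$ is the classical Gysin triangle $M(U) \to M(\overline{U}) \to M(Z_1)(1)[2] \to M(U)[1]$, and the inductive step splits $\mathscr{Z}$ into $\{Z_1, \ldots, Z_{r-1}\}$ together with $Z_r$ via the octahedral axiom, reducing to the two cases already handled. The technical heart is the sign and compatibility bookkeeping so that the \v{C}ech differentials after dualization match the iterated Gysin boundary maps; alternatively, one may simply appeal to the Voevodsky--Kelly duality for smooth but not necessarily proper schemes with ${\bf Q}$-coefficients and resolution of singularities available, which directly yields $\underline{\rm Hom}(M^c(U), {\bf 1}(n)[2n]) \cong M(U)$ when $\dim U = n$ and bypasses the compatibility argument altogether.
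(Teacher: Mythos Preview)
Your proposal is correct, and your dimension remark is apt: the paper's use of \cite[Theorem~4.3.7]{Voe2} tacitly requires $\dim U = n$ as well. However, the paper's argument is considerably shorter and entirely sidesteps the Gysin/\v{C}ech compatibility issue you flag as the main obstacle. Rather than dualizing the compactification triangle, the paper observes that $M(U)$ lies in the thick triangulated subcategory $\mathcal{T}\subset\dmeff$ generated by $M(X)$ for $X$ smooth projective of dimension $\leq n$, on which the double-dual functor $F\mapsto\underline{\rm Hom}(\underline{\rm Hom}(F,{\bf L}^n),{\bf L}^n)$ is an equivalence by \cite[Theorem~4.3.2]{Voe2}. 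Since the reverse identity $\underline{\rm Hom}(M(U),{\bf L}^n)\cong M^c(U)$ is already established in the proof of \cite[Theorem~4.3.7]{Voe2}, one simply applies $\underline{\rm Hom}(-,{\bf L}^n)$ to it and invokes biduality.

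Your explicit route---dualizing the localization triangle and matching the result against an iterated Gysin presentation of $M(U)$---would succeed, but it obliges you to verify at each step that the dual of the closed-immersion pushforward agrees with the Gysin map, exactly the bookkeeping you anticipated. The biduality trick makes this unnecessary. Your closing ``alternative'' is in spirit the same shortcut, though the paper reaches it via biduality on $\mathcal{T}$ rather than by citing a ready-made duality theorem for open smooth schemes.
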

\begin{proof}
  Let $\mathcal{T}$ be the full subcategory of $\dmeff$ generated by objects of the form $M(X)[n]$ where $n\in {\bf Z}$ and $X$ is a scheme smooth and projective over $k$ whose dimension is $\leq n$. By \cite[Theorem 4.3.2]{Voe2}, the functor
  \[\mathcal{T}\rightarrow \mathcal{T}\]
  given by $F\mapsto \underline{\rm Hom}(\underline{\rm Hom}(F,{\bf L}^n),{\bf L}^n)$ is an equivalence. In the proof of \cite[Theorem 4.3.7]{Voe2}, we have that
  \[\underline{\rm Hom}(M(U),{\bf L}^n)\cong M^c(U).\]
  Thus the conclusion follows from these.
\end{proof}
\begin{none}\label{6.15}
  Assume $({\rm C}_{\leq n})$, $({\rm Mur}_{\leq n})$, $({\rm wVan}_{\leq {n-1}}')$, $({\rm Semi}_{\leq n-1})$, and $({\rm Res}_{\leq n})$, and let $U$ be an integral scheme smooth and separated over $k$ whose dimension is $\leq n$. By $({\rm Res}_{\leq n})$, we can choose a closed immersion $i:Z\rightarrow X$ and a projective smooth morphism $X\rightarrow k$ of schemes such that
  \begin{enumerate}
    \item[(i)] the complement of $i$ is $U$,
    \item[(ii)] $Z=Z_1\cup \cdots \cup Z_r$ is a divisor with strict normal crossings.
  \end{enumerate}
  For $1\leq i\leq r$, let $K_i$ be the total complex of the double complex
  \[\oplus_{|I|=r}M(Z_I)\rightarrow \cdots \rightarrow \oplus_{|I|=i}M(Z_I)\]
  in ${\rm C}({\rm PSh}^{tr})$ where $Z_I:=Z_{i_1}\times_Z \cdots \times_Z Z_{i_r}$ if $I=\{i_1,\ldots,i_r\}$. Then we have the distinguished triangle
  \[K_{i+1}\rightarrow \oplus_{|I|=i+1}M(Z_I)\rightarrow K_i\rightarrow K_{i+1}[1]\]
  for each $1\leq i\leq r-1$. By Proposition \ref{6.14}, $\oplus_{|I|=i}M(Z_I)$ is an object of $\mathcal{C}_{\leq n-2}$ if $i>1$ and of $\mathcal{C}_{\leq n-1}$ if $i=1$. In particular, $K_r$ is an object of $\mathcal{C}_{\leq n-2}$ if $r>1$ and of $\mathcal{C}_{\leq n-1}$ if $r=1$. By Theorem \ref{5.2} and Proposition \ref{5.4}, we can apply Proposition \ref{6.9} to the above distinguished triangle. Thus each $K_i$ is an object of $\mathcal{C}_{\leq n-2}$ if $i>0$ and of $\mathcal{C}_{\leq n-1}$ if $i=0$. in particular, $M(Z)=K_0$ is an object of $\mathcal{C}_{\leq n-1}$. In Proposition \ref{6.5}, we have the sequence
  \[M_{2n-2}(K_1)\rightarrow \oplus_{|I|=1}M_{2n-2}(Z_I)\rightarrow M_{2n-2}(K_0)\rightarrow M_{2n-3}(K_1).\]
  Since $K_1$ is an object of $\mathcal{C}_{\leq n-2}$, $M_{2n-2}(K_1)=M_{2n-3}(K_1)=0$. By Proposition \ref{6.5},
  \[\oplus_{|I|=1}M_{2n-2}(Z_I)\cong M_{2n-2}(K_0).\]
  Thus $M_{2n-2}(Z)$ is a pure $(n-1)$-motive of weight $2n-2$.

  Let us consider the following conjecture, which is a weaker version of $({\rm Semi}_{\leq n})$.
  \begin{enumerate}
    \item[$({\rm Semi}_{\leq n}')$] Assume $({\rm Semi}_{\leq n-1})$. For any $d$ and morphism $f:M\rightarrow N$ in ${\rm MM}_{\leq n}$ such that
    \begin{enumerate}[(i)]
      \item ($d<2n-2$ and $M$ is an $(n-1)$-motive of weights $\leq d$) or ($d=2n-2$ and $M$ is a pure $(n-1)$-motive of weight $d$),
      \item $N$ is a pure $n$-motive of weight $d$,
    \end{enumerate}
    the kernel, cokernel, and image of $f$ exist, and the cokernel of $f$ is a pure $n$-motive of weight $d$.
  \end{enumerate}
  Assume $({\rm Semi}_{\leq n}')$. Then we can apply Proposition \ref{6.9} to the distinguished triangle
  \[M(Z)\rightarrow M(X)\rightarrow M^c(U)\rightarrow M(Z)[1]\]
  in $\dmeff$ (\cite[Theorem 16.15]{MVW}), so $M^c(U)$ is an object of $\mathcal{C}_{\leq n}$. Since $\underline{\rm Hom}(M^c(U),{\bf L}^n)\cong M(U)$ by Lemma \ref{6.16}, $M(U)$ is an object of $\mathcal{C}_{\leq n}$ by Proposition \ref{6.10}. Thus we get the following result.
\end{none}
\begin{thm}\label{6.12}
  Assume $({\rm CK}_{\leq n})$, $({\rm Mur}_{\leq n})$, $({\rm wVan}_{\leq n}')$, $({\rm Res}_{\leq n})$, and $({\rm Semi}_{\leq n}')$. Then for any integral scheme $U$ smooth over $k$ whose dimension is $\leq n$, $M(U)$ and $M^c(U)$ are objects of $\mathcal{C}_{\leq n}$.
\end{thm}
\section{Realizations}
\begin{none}\label{7.1}
Let
\[
R_\ell:\dmeff\rightarrow {\rm D}_c^b(k_{\et},{\bf Q}_\ell)
\]
denote the $\ell$-adic realization functor (\cite[Remark 7.2.25]{MR3477640}), which commutes with the Grothendieck six operations (\cite[Definition A.1.10]{MR3477640}).
Here, ${\rm D}_c^b(k_{\et},{\bf Q}_\ell)$ denotes the bounded derived category of constructible \'etale ${\bf Q}_\ell$-sheaves.
\end{none}

\begin{lemma}
\label{7.8}
Let $X$ be a purely $n$-dimensional scheme smooth over $k$.
Then there are canonical isomorphisms
\[
R_\ell(M(X))\cong R\Gamma_{\et,c}(X_{\overline{k}},{\bf Q}_{\ell})(n)[2n],\quad R_\ell(M^c(X))\cong R\Gamma_{\et}(X_{\overline{k}},{\bf Q}_{\ell})(n)[2n].
\]
\end{lemma}
\begin{proof}
Let $p:X\rightarrow {\rm Spec}\,k$ denote the structure morphism.
By the Poincar\'e duality (\cite[Definition A.1.10(3)]{MR3477640}), there is a canonical isomorphism
\[
M(X)\cong p_!p^*{\bf 1}(n)[2n].
\]
Thus
\[
R_\ell(M(X))\cong p_!p^*{\bf Q}_\ell (n)[2n]\cong R\Gamma_{\et,c}(X_{\overline{k}},{\bf Q}_{\ell})(n)[2n].
\]

By \cite[Theorem 16.24]{MVW} (see \cite{Kel} to remove the assumption of resolution of singularities), for any scheme $T$ smooth over $k$ and $i\in {\bf Z}$ there is a canonical isomorphism
\[
{\rm Hom}(M(T)[i],M^c(X))\cong {\rm Hom}(M(T\times X)[i],{\bf 1}(n)[2n]).
\]
On the other hand, by the Grothendieck six operations formalism there is a canonical isomorphism
\[
{\rm Hom}(M(T)[i],p_*p^*M(X))\cong {\rm Hom}(M(T\times X),{\bf 1}).
\]
Thus there is a canonical isomorphism
\[
M^c(X)\cong p_*p^*{\bf 1}(n)[2n].
\]
Then
\[
R_\ell(M^c(X))\cong p_*p^*{\bf Q}_\ell (n)[2n]\cong R\Gamma_{\et}(X_{\overline{k}},{\bf Q}_{\ell})(n)[2n].
\]
\end{proof}
\begin{none}
\label{7.9}
Let $X$ be a purely $n$-dimensional scheme smooth and projective over $k$.
By \cite[0.3]{MR1265526}, there is an isomorphism
\[
R\Gamma_{\et}(X_{\overline{k}},{\bf Q}_{\ell})\cong \bigoplus_{d=0}^{2n} H_{\et}^d(X_{\overline{k}},{\bf Q}_\ell)[-d].
\]
Thus with the \'etale homology notation
\[
H_d^{\et}(X_{\overline{k}},{\bf Q}_\ell)\cong H_{\et,c}^{2n-d}(X_{\overline{k}},{\bf Q})(n)\cong H_{\et}^{2n-d}(X_{\overline{k}},{\bf Q})(n)
\]
we have an isomorphism
\begin{equation}
\label{7.9.1}
R_\ell(M(X))\cong \bigoplus_{d=0}^{2n}H_d^{\et}(X_{\overline{k}},{\bf Q}_\ell)[d].
\end{equation}
\end{none}

\begin{none}\label{7.10}
Assume $({\rm C}_{\leq n})$.
For $X\in SmProj_{\leq n}$, the projector $p_d:M(X)\rightarrow M(X)$ in \ref{1.15} corresponds to the projector
\[
\bigoplus_{d=0}^{2n}H_d^{\et}(X_{\overline{k}},{\bf Q}_\ell)[d]\rightarrow H_d^{\et}(X_{\overline{k}},{\bf Q}_{\ell})[d]\rightarrow \bigoplus_{d=0}^{2n}H_d^{\et}(X_{\overline{k}},{\bf Q}_\ell)[d].
\]
Thus we have an isomorphism
\[
R_\ell(M_d(X)[-d])\cong H_d^{\et}(X_{\overline{k}},{\bf Q}_\ell).
\]
  Consider the usual $t$-structure on the derived category ${\rm D}_c^b(k_{\et},{\bf Q}_\ell)$ whose heart is ${\rm Rep}_{{\rm Gal}(\overline{k}/k)}({\bf Q}_\ell)$. Then $R_\ell(M_d(X)[-d])$ is in ${\rm Rep}_{{\rm Gal}(\overline{k}/k)}({\bf Q}_\ell)$.

  For each $d$, let $\tau_{\leq d}$ and $\tau_{\geq d}$ denote the homological truncation functors of ${\rm D}_c^b(k_{\et},{\bf Q}_\ell)$. By the following proposition, we obtain a functor
    \[R_\ell:{\rm MM}_{\leq n}\rightarrow {\rm Rep}_{{\rm Gal}(\overline{k}/k)}({\bf Q}_\ell).\]
\end{none}
\begin{prop}\label{7.2}
  Assume $({\rm C}_{\leq n})$. For any $n$-motive $M$, $R_\ell(M)$ is in ${\rm Rep}_{{\rm Gal}(\overline{k}/k)}({\bf Q}_\ell)$.
\end{prop}
\begin{proof}
  We may assume that $M$ is an $n$-motive of weights $\leq d$. If $d=-1$, we are done since $M=0$. Hence assume that $d\geq 0$. Consider the distinguished triangle
  \[Gr_dM\rightarrow M\rightarrow W^{d-1}M\rightarrow Gr_dM[1]\]
  in $\dmeff$. By induction, $R_\ell(W^{d-1}M)$ is in the heart of ${\rm D}_c^b(k_{\et},{\bf Q}_\ell)$, and by \ref{7.10}, $R_\ell(Gr_dM)$ is in the heart. Thus $R_\ell(M)$ is also in the heart.
\end{proof}
\begin{prop}\label{7.4}
  {\rm (1)} For any object $F$ of $M_{\leq d}\mathcal{C}_{\leq n}$, $R_\ell(F)$ is in $\tau_{\geq 0}\tau_{\leq d}{\rm D}_c^b(k_{\et},{\bf Q}_\ell)$.\\[5pt]
  {\rm (2)} For any object $F$ of $M_{\geq d}\mathcal{C}_{\leq n}$, $R_\ell(F)$ is in $\tau_{\leq 2n}\tau_{\geq d}{\rm D}_c^b(k_{\et},{\bf Q}_\ell)$.
\end{prop}
\begin{proof}
  (1) Let us use an induction on $d$. If $d=0$, then we are done by Proposition \ref{7.2}. Thus assume that $d>0$. Consider the distinguished triangle
  \[R_\ell(M_d(F))\rightarrow R_\ell (F)\rightarrow R_\ell (M_{\leq d-1}(F))\rightarrow R_\ell (M_d(F)[1])\]
  in ${\rm D}_c^b(k_{\et},{\bf Q}_\ell)$. By Proposition \ref{7.2}, $R_\ell(M_d(F)[-d])$ is in $\tau_{\geq 0}\tau_{\leq 0}{\rm D}_c^b(k_{\et},{\bf Q}_\ell)$. Thus $R_\ell(M_d(F))$ is in $\tau_{\geq d}\tau_{\leq d}{\rm D}_c^b(k_{\et},{\bf Q}_\ell)$. Since $R_{\ell}(M_{\leq d-1}(F))$ is in $\tau_{\geq 0}\tau_{\leq d-1}{\rm D}_c^b(k_{\et},{\bf Q}_\ell)$ by induction on $d$, $R_{\ell}(F)$ is in $\tau_{\geq 0}\tau_{\leq d-1}{\rm D}_c^b(k_{\et},{\bf Q}_\ell)$.\\[5pt]
  (2) Let us use an induction on $d$. If $d=2n$, then we are done by Proposition \ref{7.2}. Thus assume that $d<2n$. Consider the distinguished triangle
    \[R_\ell(M_{\geq d+1}(F))\rightarrow R_\ell (F)\rightarrow R_\ell (M_{d}(F))\rightarrow R_\ell (M_{\geq d+1}(F)[1])\]
  in ${\rm D}_c^b(k_{\et},{\bf Q}_\ell)$. As above, $R_\ell(M_d(F)[-d])$ is in $\tau_{\geq d}\tau_{\leq d}{\rm D}_c^b(k_{\et},{\bf Q}_\ell)$.  Since $R_{\ell}(M_{\geq d+1}(F))$ is in $\tau_{\geq d+1}\tau_{\leq 2n}{\rm D}_c^b(k_{\et},{\bf Q}_\ell)$ by induction on $d$, $R_{\ell}(F)$ is in $\tau_{\geq d}\tau_{\leq 2n}{\rm D}_c^b(k_{\et},{\bf Q}_\ell)$.
\end{proof}
\begin{prop}\label{7.5}
  Assume $({\rm C}_{\leq n})$, $({\rm Mur}_{\leq n})$, and $({\rm wVan}_{\leq {n-1}}')$, Let $F$ be an object of $\mathcal{C}_{\leq n}$. Then there is a unique isomorphism
  \[R_\ell(M_{\leq d}(F))\rightarrow \tau_{\leq d}R_\ell(F)\]
  making the diagram
  \[\begin{tikzcd}
    R_\ell(M_{\geq d+1}(F))\arrow[r]&R_\ell(F)\arrow[d,"{\rm id}"]\arrow[r]&R_\ell(M_{\leq d}(F))\arrow[r]&R_\ell(M_{\geq d+1}(F))\\
    \tau_{\geq d+1}R_\ell(F)\arrow[r]&R_\ell(F)\arrow[r]&\tau_{\leq d}R_\ell(F)\arrow[r]&\tau_{\geq d+1}R_\ell(F)
  \end{tikzcd}\]
  in $\dmeff$ commutative.
\end{prop}
\begin{proof}
  By Proposition \ref{7.4}, $R_{\ell}(M_{\geq d+1}(F))$ is in $\tau_{\geq d+1}{\rm D}_c^b(k_{\et},{\bf Q}_\ell)$. Thus the conclusion follows from Propositions \ref{3.2} and \ref{3.14}.
\end{proof}
\begin{prop}\label{7.6}
    Assume $({\rm C}_{\leq n})$, $({\rm Mur}_{\leq n})$, and $({\rm wVan}_{\leq {n-1}}')$, Let $F$ be an object of $\mathcal{C}_{\leq n}$. Then there is a unique isomorphism
  \[R_\ell(M_d(F))\rightarrow \tau_{\geq d}\tau_{\leq d}R_\ell(F)\]
  making the diagram
  \[\begin{tikzcd}
    R_\ell(M_d(F))\arrow[r]&R_\ell(M_{\leq d}(F))\arrow[d,"\sim"]\arrow[r]&R_\ell(M_{\leq d-1}(F))\arrow[r]&R_\ell(M_d(F))[1]\\
    \tau_{\geq d}\tau_{\leq d}R_\ell(F)\arrow[r]&\tau_{\leq d}R_\ell(F)\arrow[r]&\tau_{\leq d-1}R_\ell(F)\arrow[r]&\tau_{\geq d}\tau_{\leq d}R_\ell(F)
  \end{tikzcd}\]
  in $\dmeff$ commutative. Here, the vertical arrow is defined in {\rm Proposition \ref{7.5}}.
\end{prop}
\begin{proof}
  By Proposition \ref{7.4}, $R_{\ell}(M_d(F))$ is in $\tau_{\geq d}\tau_{\leq d}{\rm D}_c^b(k_{\et},{\bf Q}_\ell)$.  Thus the conclusion follows from Propositions \ref{3.2} and \ref{3.14}.
\end{proof}
\begin{thm}\label{7.7}
  Assume $({\rm CK}_{\leq n})$, $({\rm Mur}_{\leq n})$, $({\rm wVan}_{\leq n}')$, $({\rm Res}_{\leq n})$, and $({\rm Semi}_{\leq n}')$. Then for any integral scheme $U$ smooth over $k$ whose dimension is $\leq n$,
  \[R_\ell(M_d(U)[-d])\cong H_d^{\et}(U_{\overline{k}},{\bf Q}_{\ell}),\quad R_\ell(M_d^c(U)[-d])\cong H_{d,c}^{\et}(U_{\overline{k}},{\bf Q}_{\ell}).\]
\end{thm}
\begin{proof}
We may assume that $U$ has pure $n$-dimension.
Here, recall that we use the \'etale homology notation
\[
H_d^{\et}(U_{\overline{k}},{\bf Q}_\ell):=H_{\et,c}^{2n-d}(U_{\overline{k}},{\bf Q}_{\ell})(n),\quad H_{d,c}^{\et}(U_{\overline{k}},{\bf Q}_\ell):=H_{\et}^{2n-d}(U_{\overline{k}},{\bf Q}_{\ell})(n).
\]
By  Propositions \ref{6.12} and \ref{7.6}, it suffices to show that
\[
\tau_{\geq d}\tau_{\leq d}R_{\ell}(M(U))
\cong H_d^{\et}(U_{\overline{k}},{\bf Q}_{\ell}),
\quad
\tau_{\geq d}\tau_{\leq d}R_{\ell}(M^c(U))
\cong H_{d,c}^{\et}(U_{\overline{k}},{\bf Q}_{\ell}).\]
These follow from Lemma \ref{7.8}.
\end{proof}
\section{2-motives}
\begin{prop}\label{8.4}
  The condition $({\rm Semi}_{\leq 1})$ holds.
\end{prop}
\begin{proof}
  Let $X$ be a connected curve smooth and projective over $k$ with $k':=\Gamma(X,\mathcal{O}_X)$. Then $M(X)\cong M_0(X)\oplus M_1(X)\oplus M_2(X)$ with
  \[M_0(X)\cong M(k'),\quad M_1(X)\cong {\rm Pic}^0(X),\quad M_2(X)\cong M(k')\otimes {\bf L}.\]
  The rational equivalence and numerical equivalence are equal for codimension 0. Thus by \cite{Jan3}, ${\rm Hom}(M_0(X),M_0(X))$ is a semisimple ring. By \cite{Voe},
  \[{\rm Hom}(M_2(X),M_2(X))\cong {\rm Hom}(M_0(X),M_0(X)).\]
  Thus ${\rm Hom}(M_2(X),M_2(X))$ is a semisimple ring. The category of abelian varieties up to isogeny is a semisimple category, and ${\rm Hom}(M_1(X),M_1(X))$ is isomorphic to the group of homomorphisms ${\rm Pic}^0(X)\rightarrow {\rm Pic}^0(X)$ of abelian varieties up to isogeny by \cite[Proposition 4.5]{Sch}. These mean that ${\rm Hom}(M_1(X),M_1(X))$ is a semisimple ring.
\end{proof}
\begin{prop}\label{8.5}
  The condition $({\rm Van}_{\leq 1})$ holds.
\end{prop}
\begin{proof}
   Since $({\rm Mur}_{\leq 2})$ holds by \cite[Theorem 7.3.10]{KMP}, it remains  to show that for any $X,Y\in SmProj_{\leq 1}$, $p<0$, and $2\geq d,e\geq 0$ with $d>e+p-1$,
  \[{\rm Hom}(M_d(X),M_e(Y)[p])=0.\]
  We may assume that $X$ and $Y$ are connected. If $e=0$, then it suffices to show that \[{\rm Hom}(M(X),{\bf 1}[p])=0.\] This holds by \cite[Vanishing Theorem 19.3]{MVW}.

  If $e=1$, then it suffices to show that ${\rm Hom}(M(X),M_1(Y)[p])=0$. Hence it suffices to show that the induced homomorphism
  \[{\rm Hom}(M(X),M(Y)[-1])\rightarrow {\rm Hom}(M(X),M_0(Y)[-1])\]
  of abelian groups is an isomorphism. By \cite[Theorems 16.24, 19.1]{MVW}, it suffices to show that the induced homomorphism
  \[H^{1}(X\times Y,{\bf Z}(1))\rightarrow H^{1}(X\times {\rm Spec}\,k'',{\bf Z}(1))\]
  of motivic cohomology groups is an isomorphism where $k'':=\Gamma(Y,\mathcal{O}_Y)$. Since $X$ and $Y$ are projective over $k$,
  \[H^{1}(X\times Y,{\bf Z}(1))\cong \Gamma(X\times Y,\mathcal{O}_{X\times Y}^*)\cong k'^*\times k''^*,\]
  \[H^{1}(X\times {\rm Spec}\,k'',{\bf Z}(1))\cong \Gamma(X\times {\rm Spec}\,k'',\mathcal{O}_{X\times {\rm Spec}\,k''}^*)\cong k'^*\times k''^*\]
  by \cite[Corollary 4.2]{MVW} where $k':=\Gamma(X,\mathcal{O}_X)$. This proves the claim when $e=1$.

  If $e=2$ and $p\leq -2$, then we are done by \cite[Vanishing Theorem 19.3]{MVW}. If $e=2$ and $p=-1$, then it suffices to show that the induced homomorphism
  \[{\rm Hom}(M(X),{\bf 1}(1)[1])\rightarrow {\rm Hom}(M_0(X),{\bf 1}(1)[1])\]
  is an isomorphism. By \cite[Theorem 16.24]{MVW},  it suffices to show that the induced homomorphism
  \[H^1(X,{\bf Z}(1))\rightarrow H^1(k',{\bf Z}(1))\]
  of motivic cohomology groups is an isomorphism where $k':=\Gamma(X,\mathcal{O}_X)$. Since $X$ is projective over $k$,
  \[H^1(X,{\bf Z}(1))\cong k'^*,\quad H^1(k',{\bf Z}(1))\cong k'^*\]
  by \cite[Corollary 4.2]{MVW}. This proves the claim when $e=2$ and $p=-1$.
\end{proof}

\begin{prop}\label{8.2}
  The condition $({\rm Semi}_{\leq 2}')$ holds.
\end{prop}
\begin{proof}
  Note that $({\rm Semi}_{\leq 1})$ holds by Proposition \ref{8.4}. Let $f:M\rightarrow N$ be a morphism in ${\rm MM}_{\leq 2}$ such that
  \begin{enumerate}[(i)]
    \item ($d<2$ and $M$ is an $1$-motive of weights $\leq d$) or ($d=2$ and $M$ is a pure $1$-motive of weight $d$),
    \item $N$ is a pure $2$-motive of weight $d$.
  \end{enumerate}
  If $d=0$ or $d=1$, then $N$ is a pure $d$-motive of weight $d$ by \cite{Mur}. Since $({\rm Mur}_{\leq 2})$ (resp.\ $({\rm Semi}_{\leq 1})$, resp.\ $({\rm Van}_{\leq 1})$) holds by \cite[Theorem 7.3.10]{KMP} (resp.\ Proposition \ref{8.4}, resp.\ Proposition \ref{8.5}), the kernel, cokernel, and image of $f$ exist by Theorem \ref{5.2}. The cokernel of $f$ is a pure $d$-motive of weight $d$ by Proposition \ref{5.4}. Thus the remaining case is when $d=2$.

  Hence assume that $d=2$.
Then since $M$ is a pure $1$-motive of weight $2$, $M$ is a finite direct sum of ${\bf 1}(1)$.
By \cite[Lemma 7.4.1]{KMP}, there is a decomposition
  \[N\cong N'\oplus N''\]
  in $\dmeff$ for some pure $2$-motives $N'$ and $N''$ of weight $2$ such that
  \[{\rm Hom}({\bf 1}(1),N'')={\rm Hom}(N'',{\bf 1}(1))=0.\quad N'\cong {\bf 1}(1)^{\oplus r}\]
  for some $r\geq 0$. Consider the induced morphism $f':M\rightarrow N'$. From this, we see that
  \[{\rm ker}\,f\cong {\rm ker}\,f',\quad {\rm cok}\,f\cong {\rm cok}\,f'\oplus N'',\quad {\rm im}\,f\cong {\rm im}\,f'.\]
  Thus the kernel, cokernel, and image of $f$ exist, and the cokernel of $f$ is a pure $2$-motive of weight $2$.
\end{proof}
\begin{prop}\label{8.3}
  The condition $({\rm wVan}_{\leq 2}')$ holds.
\end{prop}
\begin{proof}
 We need to show that for any $X,Y\in SmProj_{\leq 2}$ and $4\geq d>e\geq 0$,
  \[{\rm Hom}(M_d(X),M_e(Y)[-1])=0.\]
  Since $\underline{\rm Hom}(M_r(S),{\bf L}^2)\cong M_{4-r}(S)$ by \cite{Mur}, this is equivalent to showing that
  \[{\rm Hom}(M_{4-e}(Y),M_{4-d}(X)[-1])=0.\]
  If $e\geq 2$, then $4-d\leq 1$. Thus we reduce to the case when $e\leq 1$. Assuming this, the conclusion follows from Proposition \ref{8.5}.
\end{proof}
\begin{none}\label{8.7}
  We have verified $({\rm wVan}_{\leq 2}')$ and $({\rm Semi}_{\leq 2}')$. Thus by \ref{1.11}, we get the following result.
\end{none}
\begin{thm}\label{8.6}
When $n=2$, the conclusions of {\rm Theorems \ref{1.6}, \ref{1.7}, and \ref{1.10}} hold.
\end{thm}

\bibliography{bib}
\bibliographystyle{siam}

\end{document}